\documentclass[leqno,11pt]{amsart}
\textwidth 16 cm
\textheight 22 cm
\oddsidemargin +0.1cm
 \evensidemargin +0.1 cm
\usepackage{amsfonts} %
\usepackage[backref=page]{hyperref}
\usepackage{amsmath,amssymb} %
\usepackage{latexsym}
\numberwithin {equation}{section} %
\newtheorem{theo}{Theorem}[section] %
\newtheorem{prop}[theo]{Proposition} %

\newtheorem{remark}[theo]{Remark}
\theoremstyle{definition}

\newtheorem{example}[theo]{Example}

\newtheorem{defi}[theo]{Definition}
\newcommand{\clos}{\operatorname{cl}} %
\newcommand{\diam}{\operatorname{diam}} %

\begin{document}
%\input{AFAP06-fp-vp-tit-b}%
%\newpage %
%\input{AFAP06-fp-vp-int-b}%
%\newpage
%\input{AFAP06-fp-vp-cup11pt-b}%
%\frontmatter %

\title{Completeness in quasi-pseudometric spaces -- a survey}
\author{S. Cobza\c{s} }
\address{\it Babe\c s-Bolyai University, Faculty of Mathematics
and Computer Science, 400 084 Cluj-Napoca, Romania}\;\;
\email{scobzas@math.ubbcluj.ro}
\thanks{Published in  \textbf{Mathematics 2020, 8, 1279}. }
\begin{abstract}

The aim of this paper  is to discuss the relations between various notions of sequential completeness and the corresponding notions of completeness by nets or by filters in the setting of quasi-metric spaces. We propose a new definition of right $K$-Cauchy net in a quasi-metric  space for which the  corresponding completeness is equivalent to the sequential completeness. In this way we complete some results of R.~A. Stoltenberg, Proc. London Math. Soc. \textbf{17} (1967), 226--240,  and V.~Gregori and  J.~Ferrer, Proc. Lond. Math. Soc., III Ser., \textbf{49} (1984), 36. A discussion on nets defined over ordered or pre-ordered directed sets is also included.

\medskip
\textbf{Classification MSC 2020:}  54E15 54E25 54E50 46S99

\textbf{Key words:} metric space, quasi-metric space, uniform space, quasi-uniform space,    Cauchy sequence, Cauchy net, Cauchy filter, completeness

\end{abstract}
\date{\today}
\maketitle

\section{Introduction}

It is well known that completeness is an essential tool in the study of  metric spaces, particularly for fixed points results in such spaces.
 The study of completeness in quasi-metric spaces is considerably more involved, due to the lack of symmetry of the distance -- there are several notions of completeness all agreing with the usual one in the metric case  (see \cite{reily-subram82} or \cite{Cobzas}).  Again these notions are essential in proving fixed point results in quasi-metric spaces as it is shown by some papers on this topic as, for instance, \cite{chandok20}, \cite{gupta20}, \cite{latif18}, \cite{romag-tirado15} (see also  the book \cite{AKR15}).  A survey on the relations between completeness and the existence of fixed points in various circumstances  is given in  \cite{cob18}.

It is known that in the metric case the notions of completeness by sequences, by nets and by filters all agree and, further, the completeness of a metric space is equivalent to the completeness of the associated uniform space (in all senses). In the present paper we show that, in the quasi-metric case,  these notions agree  in some cases, but there are situations when they are different, mainly for the notions of right-completeness.  Such a situation was emphasized by Stoltenberg \cite{stolt67}, who proposed a  notion of Cauchy net for which completeness agrees with the sequential completeness. Later Gregori and Ferrer \cite{greg-fer84} found a gap in the proof given by Stoltenberg and proposed a new version of right $K$-Cauchy net for which  sequential completeness and  completeness by nets agree. In the present paper we include a discussion on this Gregori-Ferrer notion of Cauchy net and complete these results by proposing a  notion of right-$K$-Cauchy    net for which the equivalence with sequential completeness holds.

\section{Metric and uniform spaces}

For a mapping $d:X\times X\to\mathbb{R}$ on a set $X$ consider the following conditions:
\begin{align*}
\mbox{(M1)}&\qquad d(x,y)\geqslant 0 \quad \mbox{and}  \quad d(x,x)=0;\\
\mbox{(M2)}&\qquad d(x,y)=d(y,x);\\
\mbox{(M3)}&\qquad d(x,z)\leqslant d(x,y)+d(y,z);\\
\mbox{(M4)}&\qquad d(x,y)=0 \;\Rightarrow\; x=y;\\
\mbox{(M4$'$)}&\qquad d(x,y)=d(y,x)=0\;\Rightarrow\; x=y,
\end{align*}
for all $x,y,z\in X.$

The mapping $d$ is called a  \emph{pseudometric} if it satisfies  (M1), (M2) and (M3). A pseudometric that also satisfies (M4) is called a \emph{metric}.

The \emph{open} and \emph{closed balls} in a pseudometric space $(X,d)$ are defined by
\begin{equation}\label{def.ball}
B_d(x,r)=\{y\in X: d(x,y)<r\} \quad\mbox{ and}\quad B_d[x,r]=\{y\in X: d(x,y)\leqslant r\}\,,
\end{equation}

respectively.

A \emph{filter} on a set $X$ is a nonempty family $\mathcal F$ of nonempty subsets of $X$  satisfying the conditions
\begin{align*}
  \mbox{(F1)}&\qquad F\subseteq G\;\mbox{ and }\;F\in\mathcal F\;\Rightarrow\; G\in \mathcal F;\\
  \mbox{(F2)}&\qquad F\cap G\in \mathcal F\quad\mbox{for all}\quad F,G\in \mathcal F.
\end{align*}

It is obvious that  (F2) implies
$$\mbox{(F2$'$)}\qquad F_1,\ldots ,F_n\in {\mathcal F}\;\Rightarrow \; F_1\cap \ldots \cap F_n\in {\mathcal F}.$$
for all $n\in\mathbb{N}$  and $F_1,\ldots ,F_n\in {\mathcal F}$.

A \emph{base} of a filter $\mathcal F$ is a subset $\mathcal B$ of $ \mathcal F$ such that every $F\in\mathcal F$ contains a $B\in\mathcal B$.

A nonempty family $\mathcal B$ of nonempty subsets of $X$ such that
$$
  {\rm (BF1)}\qquad \forall B_1,B_2\in \mathcal B,\; \exists B\in\mathcal B,\; B\subseteq B_1\cap B_2\,.
$$
  generates  a filter $\mathcal F(\mathcal B)$ given by
$$
\mathcal F(\mathcal B)=\{U\subseteq X :\exists\, B\in\mathcal B,\; B\subseteq U\}\,.$$

A family $\mathcal B$ satisfying (BF1) is called a \emph{filter base}.

A relation $\le$ on a set $X$ is called a \emph{preorder} if it is reflexive and transitive, that is,
$$
{\rm (O1)}\quad x\le x\quad\mbox{and}\quad {\rm (O2)}\quad x\le y\wedge y\le z\;\Rightarrow\; x\le z\,,$$
for all $x,y,z\in X$. If the relation $\le $ is further antireflexive, i.e.
$$
 {\rm (O3)}\quad x\le y\wedge y\le x\;\Rightarrow\; x= y\,,$$
 for all $x,y\in X$, then $\le $ is called an \emph{order}.  A set $X$ equipped with a preorder (order) $\le$  is called a preordered (ordered) set,
denoted as $(X,\le)$.

A preordered set $(I,\le)$ is called \emph{directed} if for every $i_1,i_2\in I$  there exists $j\in I$ with $i_1\le j$ and $i_2\le j$.
A \emph{net} in a  set $X$ is a mapping $\phi:I\to X$, where $(I,\le)$ is a directed set. The alternative notation $(x_i:i\in I)$, where $x_i=\phi(i),\, i\in I,$ is also used.

A \emph{uniformity} on a set $X$ is a filter $\mathcal U$ on $X\times X$ such that
\begin{equation*}
\begin{aligned}
\mbox{(U1)}&\qquad \Delta(X)\subseteq U,\; \forall U\in \mathcal{U};\\
\mbox{(U2)}&\qquad \forall U\in \mathcal{U},\; \exists V\in \mathcal{U},\;\mbox{ such that } \; V\circ V\subseteq U\;,\\
\mbox{(U3)}&\qquad \forall U\in \mathcal{U},\; U^{-1}\in \mathcal U.
\end{aligned}
\end{equation*}
where
\begin{align*}
 \Delta(X)&=\{(x,x) : x\in X\}\;\mbox{ denotes the diagonal of }\; X,\\
 M\circ N &=\{(x,z)\in X\times X : \exists y\in X,\; (x,y)\in
M\;\mbox{and} \; (y,z)\in N\}, \;\mbox{ and}\\
 M^{-1}&=\{(y,x) : (x,y)\in M\}\,,
\end{align*}
for any $M,N\subseteq X\times X.$

The sets in $\mathcal U$ are called \emph{entourages.}  A \emph{base} for a uniformity $\mathcal U$ is a base of the filter $\mathcal U$.
The composition $V\circ V$ is denoted sometimes simply by $V^2.$  Since every entourage contains the
diagonal $\Delta(X),$ the inclusion $V^2\subseteq U$ implies $V\subseteq U.$

For $U\in \mathcal{U}, \, x\in X$ and $Z\subseteq X$ put
$$
U(x) =\{y\in X: (x,y)\in U\}\quad\mbox{and}\quad U[Z]=\bigcup\{U(z):
z\in Z\}\,.
$$
 A uniformity $\mathcal{U}$ generates a topology $\tau(\mathcal{U})$
 on $X$ for which the family of sets
 $
\{ U(x): U\in \mathcal{U}\}$
is a base of neighborhoods of any point $x\in X.$

Let $(X,d)$ be a pseudometric space.  Then the pseudometric $d$ generates a topology  $\tau_d$ for which
$
\{B_d(x,r) : r>0\},$
is a base of neighborhoods for every $x\in X$.

The pseudometric $d$ generates also   a uniform structure $\mathcal U_d$ on $X$ having as basis of entourages
the sets
$$
U_\varepsilon=\{(x,y)\in X\times X: d(x,y)<\varepsilon\},\;\; \varepsilon>0\,.$$

Since
$$
U_\varepsilon(x)=B_d(x,\varepsilon),\; x\in X,\;\varepsilon>0,$$
it follows that the topology $\tau(\mathcal U_d)$ agrees with the topology $\tau_d$ generated by the pseudometric $d$.

A sequence $(x_n)$ in $(X,d)$ is called \emph{Cauchy} (or \emph{fundamental}) if for every $\varepsilon>0$ there exists $n_\varepsilon\in\mathbb{N}$ such that
$$
d(x_n,x_m)<\varepsilon \;\mbox{ for all }\; m,n\in \mathbb{N}\;\mbox{ with }\; m,n\geqslant n_\varepsilon\,,$$
a condition written also as
$$
\lim_{m,n\to\infty}d(x_m,x_n)=0\,.$$

A sequence $(x_n)$ in a uniform space $(X,\mathcal U)$ is called $\mathcal U$-\emph{Cauchy} (or simply \emph{Cauchy}) if for every $U\in\mathcal U$ there exists $n_\varepsilon\in \mathbb{N}$ such that
$$
(x_m,x_n)\in U  \;\mbox{ for all }\; m,n\in \mathbb{N}\;\mbox{ with }\; m,n\geqslant n_\varepsilon\,.$$

It is obvious that, in the case of a pseudometric space $(X,d)$, a sequence is Cauchy with respect to the pseudometric $d$ if and only if it is Cauchy with respect to the uniformity $\mathcal U_d$.

The Cauchyness of nets in pseudometric and in uniform spaces is defined by analogy with that of sequences.

A filter $\mathcal F$ in a uniform space $(X,\mathcal U)$ is called $\mathcal U$-\emph{Cauchy} (or simply \emph{Cauchy}) if for every $U\in\mathcal U$ there exists $F\in\mathcal F$ such that
$$
F\times F\subseteq U\,.$$

\begin{defi}
 A pseudometric space $(X,d)$ is called \emph{complete} if every Cauchy sequence in $X$ converges. A uniform space  $(X,\mathcal U)$  is called \emph{sequentially complete}
 if every $\mathcal U$-Cauchy sequence in $X$ converges and \emph{complete} if every $\mathcal U$-Cauchy net  in $X$ converges (or, equivalently, if every $\mathcal U$-Cauchy filter in $X$ converges).
\end{defi}
\begin{remark}
 We can define the completeness of a subset $Y$ of a pseudometric space  $(X,d)$ by the condition that
every Cauchy sequence in $Y$ converges to  some element of $Y$. A closed subset of a pseudometric space is complete and a complete subset of a metric space is closed.  A complete subset of a pseudometric space need not be  closed.
\end{remark}

The following result holds in the metric case.
\begin{theo} For a pseudometric space  $(X,d)$ the following conditions are equivalent.
\begin{enumerate}
\item[\rm 1.] The metric space $X$ is complete.
\item[\rm 2.]  Every Cauchy net in $X$ is convergent.
\item[\rm 3.] Every Cauchy filter in $(X,\mathcal U_d)$ is convergent.
\end{enumerate}
\end{theo}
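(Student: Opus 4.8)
The plan is to prove the chain $2\Rightarrow 1$, $3\Rightarrow 2$ and $1\Rightarrow 3$, arranging matters so that the pseudometric enters essentially only in the last step; everything else is formal and holds verbatim in an arbitrary uniform space. The implication $2\Rightarrow 1$ is immediate: a sequence $(x_n)$ in $X$ is nothing but a net indexed by the directed set $(\mathbb N,\leqslant)$, and for such a net Cauchyness and convergence reduce to the sequential notions, so if every Cauchy net converges then so does every Cauchy sequence.

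For $3\Rightarrow 2$ I would pass from nets to filters in the standard way. To a net $\phi=(x_i:i\in I)$ associate the filter $\mathcal F_\phi$ generated by the tails $T_i=\{x_j:j\geqslant i\}$; since $I$ is directed, $\{T_i:i\in I\}$ satisfies (BF1) and so is a filter base. One checks that $\phi$ is $\mathcal U_d$-Cauchy iff $\mathcal F_\phi$ is $\mathcal U_d$-Cauchy, and that $\phi$ converges to $x$ iff $\mathcal F_\phi$ does --- both amount to the assertion that every neighborhood of $x$ contains some tail $T_i$. Hence, starting from a Cauchy net $\phi$, the filter $\mathcal F_\phi$ is Cauchy, it converges by hypothesis $3$, and $\phi$ then converges to the same limit. (The converse passage, sending a filter $\mathcal F$ to the net on $D=\{(x,F):F\in\mathcal F,\ x\in F\}$ with $(x,F)\leqslant(x',F')\iff F'\subseteq F$ and value $x$ at $(x,F)$, yields $2\Rightarrow 3$ in exactly the same manner, so in fact $2\Leftrightarrow 3$.)

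The real content is $1\Rightarrow 3$, where I would use that $\mathcal U_d$ has the countable base $\{U_{1/n}:n\in\mathbb N\}$. Given a $\mathcal U_d$-Cauchy filter $\mathcal F$, choose for each $n$ a set $G_n\in\mathcal F$ with $G_n\times G_n\subseteq U_{1/n}$, i.e. $\diam G_n\leqslant 1/n$, and put $F_n=G_1\cap\dots\cap G_n\in\mathcal F$; then $(F_n)$ is decreasing and $\diam F_n\leqslant 1/n$. Picking $x_n\in F_n$ gives a Cauchy sequence, because $m,n\geqslant N$ forces $x_m,x_n\in F_N$ and hence $d(x_m,x_n)\leqslant 1/N$; by hypothesis $1$ it converges to some $x\in X$. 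Finally $\mathcal F\to x$: given $\varepsilon>0$, pick $N$ with $1/N<\varepsilon/2$ and $d(x_N,x)<\varepsilon/2$; then $d(x,y)\leqslant d(x,x_N)+d(x_N,y)<\varepsilon$ for every $y\in F_N$, so $F_N\subseteq B_d(x,\varepsilon)$ and therefore $B_d(x,\varepsilon)\in\mathcal F$ by (F1), which is exactly what convergence of $\mathcal F$ to $x$ means.

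The only step demanding genuine care is the net--filter dictionary in the second paragraph: verifying that the tails form a filter base, that $D$ is directed (this is where (F2) enters), and that Cauchyness and convergence are faithfully transported in both directions. All of this is soft and metric-free. By contrast $1\Rightarrow 3$ is a short triangle-inequality argument whose one real ingredient is the countability of the base $\{U_{1/n}\}$ of $\mathcal U_d$; it is precisely this countability that is unavailable in a general uniform space, which is the structural reason why there ``complete'' and ``sequentially complete'' may differ.
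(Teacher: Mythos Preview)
Your proof is correct and follows the standard route for this classical result. Note, however, that the paper states this theorem without proof --- it appears in Section~2 as background material on metric and uniform spaces, with the text moving directly from the statement to Cantor's characterization of completeness. So there is no ``paper's own proof'' to compare against; you have supplied what the paper omits, and your argument (the net--filter dictionary for $2\Leftrightarrow 3$, and the extraction of a Cauchy sequence via the countable base $\{U_{1/n}\}$ for $1\Rightarrow 3$) is exactly the textbook one.
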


An important result in metric  spaces is Cantor characterization of completeness.
\begin{theo}[Cantor theorem]\label{t.Cantor} A pseudometric space $(X,d)$ is complete if and only if  every descending sequence of nonempty closed subsets of $X$ with diameters tending to zero has nonempty intersection. This means that for any family $ F_n,\, n\in \mathbb{N},$   of nonempty closed subsets of $X$
$$
F_1\supseteq F_2 \supseteq\dots\quad\mbox{and}\quad \lim_{n\to \infty}\diam (F_n)=0\;\mbox{ implies } \; \bigcap_{n=1}^\infty F_n\ne \emptyset\,.$$

If $d$ is a metric then this intersection contains exactly one point.
\end{theo}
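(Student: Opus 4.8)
The plan is to prove both implications via the standard device of passing between Cauchy sequences and descending sequences of closed sets, the bridge being the elementary fact that in a pseudometric space the diameter of a set equals the diameter of its closure.

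For the forward implication, assume $(X,d)$ is complete and let $F_1\supseteq F_2\supseteq\cdots$ be nonempty closed sets with $\diam(F_n)\to 0$. First I would choose a point $x_n\in F_n$ for each $n$ (using that each $F_n$ is nonempty). For $m,n\geqslant k$ the monotonicity gives $x_m,x_n\in F_k$, hence $d(x_m,x_n)\leqslant\diam(F_k)$, and since $\diam(F_k)\to 0$ the sequence $(x_n)$ is Cauchy; completeness then yields a limit $x$. To see that $x\in\bigcap_n F_n$, fix $n$: the tail $(x_m)_{m\geqslant n}$ lies in $F_n$, and $F_n$ is closed, so $x\in F_n$; as $n$ was arbitrary, $x\in\bigcap_n F_n$.

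For the converse, assume the Cantor property and let $(x_n)$ be a Cauchy sequence. Set $A_n=\{x_m:m\geqslant n\}$ and $F_n=\clos(A_n)$; these are nonempty, closed, and descending. The key point is the equality $\diam(F_n)=\diam(A_n)$; since $\diam(A_n)=\sup_{j,k\geqslant n}d(x_j,x_k)\to 0$ by the Cauchy condition, we get $\diam(F_n)\to 0$, so the hypothesis provides a point $x\in\bigcap_n F_n$. Given $\varepsilon>0$, pick $n$ with $\diam(F_n)<\varepsilon$; then $x$ and every $x_m$ with $m\geqslant n$ lie in $F_n$, whence $d(x,x_m)\leqslant\diam(F_n)<\varepsilon$, which shows $x_n\to x$. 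Finally, when $d$ is a metric and $x,y\in\bigcap_n F_n$, then $d(x,y)\leqslant\diam(F_n)\to 0$ forces $d(x,y)=0$, hence $x=y$ by (M4), giving uniqueness of the point in the intersection.

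The only step requiring a moment's care — and the one I would single out as the main (if modest) obstacle — is the identity $\diam(\clos(A))=\diam(A)$: one must check that enlarging a set to its closure does not increase the diameter, which is not purely formal and rests on approximating limit points by points of $A$ and applying the triangle inequality. Everything else is routine bookkeeping with the definitions of Cauchy sequence, convergence, and closed set.
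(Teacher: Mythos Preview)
Your argument is correct and is the classical proof of Cantor's characterization of completeness. Note, however, that the paper does not actually supply a proof of this theorem: it is stated there as a well-known result (preceded only by the sentence ``An important result in metric spaces is Cantor characterization of completeness'') and no argument is given. Your write-up would therefore serve as a complete, self-contained proof where the paper has none; the approach you take --- choosing $x_n\in F_n$ in one direction, and taking $F_n=\clos\{x_m:m\geqslant n\}$ together with $\diam(\clos A)=\diam A$ in the other --- is exactly the standard one found in textbooks, and every step is sound.
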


The \emph{diameter} of a subset $Y$ of a pseudometric space $(X,d)$ is defined by
\begin{equation}\label{def.diam}
\diam(Y)=\sup\{d(x,y) :x,y\in Y\}\,.\end{equation}

\section{Quasi-pseudometric and quasi-uniform spaces}
In this section we present the basic results on quasi-metric and quasi-uniform spaces needed in the sequel, using as basic source the book \cite{Cobzas}.

\subsection{Quasi-pseudometric   spaces}

Dropping the symmetry condition (M2) in the definition of a metric one obtains the notion of quasi-pseudometric, that is, a   {\it quasi-pseudometric} on an arbitrary set $X$ is a mapping $d: X\times X\to \mathbb{R}$ satisfying the  conditions (M1)  and (M3). If $d$ satisfies further (M4$'$) then it called a \emph{quasi-metric}.
The pair $(X,d)$ is called a {\it
quasi-pseudometric space}, respectively  a {\it quasi-metric space}\footnote{In \cite{Cobzas} the term ``quasi-semimetric" is used instead of ``quasi-pseudometric"}.
Quasi-pseudometric spaces were introduced by Wilson \cite{wilson31}.

The conjugate of the quasi-pseudometric
$d$ is the quasi-pseudometric $\bar d(x,y)=d(y,x),\, x,y\in X.$ The mapping $
d^s(x,y)=\max\{d(x,y),\bar d(x,y)\},\,$ $ x,y\in X,$ is a pseudometric on $X$ which is a metric if and
only if $d$ is a quasi-metric.

If $(X,d)$ is a quasi-pseudometric space, then for $x\in X$ and $r>0$ we define the balls in $X$ as in \eqref{def.ball}

 The topology $\tau_d$ (or $\tau(d)$) of a quasi-pseudometric space $(X,d)$ can be defined through  the family
$\mathcal{V}_d(x)$ of neighborhoods  of an arbitrary  point $x\in X$:%
\begin{equation*}
\begin{aligned}
V\in \mathcal{V}_d(x)\;&\iff \; \exists r>0\;\mbox{such that}\; B_d(x,r)\subseteq V\\
                             &\iff \; \exists r'>0\;\mbox{such that}\; B_d[x,r']\subseteq V. %
\end{aligned} %
\end{equation*}

The topological notions corresponding to $d$ will be prefixed by $d$- (e.g. $d$-closure, $d$-open, etc).

The convergence of a sequence $(x_n)$ to $x$ with respect to $\tau_d,$ called $d$-convergence and
denoted by
$x_n\xrightarrow{d}x,$ can be characterized in the following way %
\begin{equation}\label{char-rho-conv1} %
         x_n\xrightarrow{d}x\;\iff\; d(x,x_n)\to 0. %
\end{equation} %

Also
\begin{equation}\label{char-rho-conv2} %
         x_n\xrightarrow{\bar d}x\;\iff\;\bar d(x,x_n)\to 0\; \iff\; d(x_n,x)\to 0. %
\end{equation}

As a space equipped with two topologies, $\tau_d$ and   $\tau_{\bar d}\,$, a quasi-pseudometric space can be viewed as a bitopological space in the sense of Kelly \cite{kelly63}.

The following example of  quasi-metric space is an important source of counterexamples in topology (see  \cite{Steen-Seb}).

\begin{example}[The Sorgenfrey line]\label{ex.Sorgenfrey}
   For $x,y\in \mathbb{R} $ define a quasi-metric $d$ by $d(x,y)=y-x,\, $ if $x\leq y$ and $d(x,y)=1$
 if $x>y.$   A basis of open $\tau_d$-open neighborhoods of a point $x\in \mathbb{R}$ is formed by the family
 $[x,x+\varepsilon),\, 0<\varepsilon <1.$ The family of intervals $(x-\varepsilon,x],\, 0<\varepsilon<1,\,$ forms a basis of open
 $\tau_{\bar d}\,$-open neighborhoods of $x.$  Obviously, the topologies $\tau_d$ and $\tau_{\bar d}$ are
 Hausdorff and $d^s(x,y)=1$ for $x\neq y,$ so that \ $\tau(d^s)$ is the discrete topology of $\mathbb{R}.$
    \end{example}

\textbf{    Asymmetric normed spaces}

Let $X$ be a real vector space. A mapping $p:X\to\mathbb{R}$ is called an \emph{asymmetric seminorm} on $X$ if

\begin{align*}
\mbox{(AN1)}&\qquad p(x)\geqslant 0;\\
\mbox{(AN2)}&\qquad  p(\alpha x)=\alpha p(x);\\
\mbox{(AN3)}&\qquad  p(x+y) \leqslant p(x)+ p(y)\,,
\end{align*}
for all $x,y\in X$ and $\alpha\geqslant 0.$

If, further,   \begin{align*}
 \mbox{(AN4)}&\qquad   p(x)= p(-x) =0\;\Rightarrow\; x=0\,,
\end{align*}
for all $x\in X,$ then $p$ is called an \emph{asymmetric norm}.

To an asymmetric seminorm $p$ one associates a quasi-pseudometric $d_{p}$ given by
$$
d_{p}(x,y)=p(y-x),\quad x,y\in X,$$
which is a quasi-metric if $p$ is an asymmetric norm.
All the topological and metric notions in an asymmetric normed space are understood as those corresponding to this quasi-pseudometric  $d_{p}$ (see \cite{Cobzas}).

The following asymmetric norm on $\mathbb{R}$ is essential in the study of asymmetric normed spaces (see \cite{Cobzas}).

\begin{example}\label{ex.u-topR}
On the field $\mathbb{R} $ of  real numbers consider the asymmetric norm
$u(\alpha)=\alpha^+:=\max\{\alpha,0\}.$
Then, for  $\alpha\in \mathbb{R},\, \bar u(\alpha)=\alpha^-:=\max\{-\alpha,0\}$ and $u^s(\alpha)=|\alpha|.$
The topology $\tau(u)$ generated by $u$ is called the \emph{upper topology } of $\mathbb{R},$ while the topology
$\tau(\bar u)$ generated by $\bar u$ is called the \emph{lower topology} of $\mathbb{R}.$ A basis of open
$\tau(u)$-neighborhoods of a point $\alpha\in \mathbb{R}$ is formed of the intervals
$(-\infty,\alpha+\varepsilon),\,\varepsilon > 0.$ A basis of open $\tau(\bar u)$-neighborhoods  is formed of the intervals
$(\alpha-\varepsilon,\infty),\, \varepsilon > 0.$

 In this space the addition is continuous from $(\mathbb{R}\times\mathbb{R},\tau_u\times\tau_u)$ to $(\mathbb{R},\tau_u)$, but the multiplication
is discontinuous   at every point  $(\alpha,\beta)\in \mathbb{R}\times\mathbb{R}.$

 The multiplication is continuous from $(\mathbb{R}_+,|\cdot|)\times(\mathbb{R},\tau_u)$ to  $(\mathbb{R},\tau_u)$ but discontinuous at every point $(\alpha,\beta)\in (-\infty,0)\times\mathbb{R}$ to  $(\mathbb{R},\tau_u)$, when $(-\infty,0) $ is equipped with the topology generated by $|\cdot|$ and $\mathbb{R} $ with $\tau_u$.
 \end{example}

The following   topological properties are true for  quasi-pseudometric spaces.
    \begin{prop}[see \cite{Cobzas}]\label{p.top-qsm1}
   If $(X,d)$ is a quasi-pseudometric space, then the following hold.
   \begin{enumerate}
   \item[\rm 1.] The ball $B_d(x,r)$ is $d$-open and  the ball $B_d[x,r]$ is
       ${\bar{d}}$-closed. The ball    $B_d[x,r]$ need not be $d$-closed.
     \item[\rm 2.]
   The topology $d$ is $T_0$ if and only if  $d $ is a quasi-metric.   \\ The topology $d$ is $T_1$ if and only if
   $d(x,y)>0$ for all  $x\neq y$  in $X$.
      \item[\rm 3.]     For every fixed $x\in X,$ the mapping $d(x,\cdot):X\to (\mathbb{R},|\cdot|)$ is
   $d$-upper semi-continuous and ${\bar d}$-lower semi-continuous. \\
   For every fixed $y\in X,$ the mapping $d(\cdot,y):X\to (\mathbb{R},|\cdot|)$ is $d$-lower semi-continuous and
   ${\bar d}$-upper semi-continuous.
\end{enumerate}%
     \end{prop}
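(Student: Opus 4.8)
The plan is to derive all three items directly from the triangle inequality (M3), using for parts~1 and~3 only elementary manipulations and for part~2 a translation of the separation axioms into the language of the basic neighborhoods $B_d(x,r)$.

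For item~1, to see that $B_d(x,r)$ is $d$-open I would take $y\in B_d(x,r)$, put $\rho:=r-d(x,y)>0$, and check via (M3) that $B_d(y,\rho)\subseteq B_d(x,r)$, since $d(x,z)\le d(x,y)+d(y,z)<d(x,y)+\rho=r$ for $z\in B_d(y,\rho)$. For the $\bar d$-closedness of $B_d[x,r]$ I would show the complement is $\tau_{\bar d}$-open: if $d(x,y)>r$, set $\rho:=d(x,y)-r>0$; for $z$ with $\bar d(y,z)=d(z,y)<\rho$ the inequality $d(x,y)\le d(x,z)+d(z,y)$ forces $d(x,z)>r$, so $B_{\bar d}(y,\rho)$ misses $B_d[x,r]$, and since $\{B_{\bar d}(y,\rho):\rho>0\}$ is a base of $\tau_{\bar d}$-neighborhoods of $y$ this gives the claim. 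For the last assertion it suffices to exhibit one space in which a closed ball fails to be $d$-closed; the two-point set $X=\{0,1\}$ with $d(0,1)=1$ and $d(1,0)=d(0,0)=d(1,1)=0$ works, because there $B_d[0,1/2]=\{0\}$ while the only $d$-open sets are $\emptyset,\{0\},X$, so $\{0\}$ is not $d$-closed. (Alternatively one may use $(\mathbb{R},u)$ from Example~\ref{ex.u-topR}, where $B_u[0,1]=(-\infty,1]$ has $\tau(u)$-closure equal to $\mathbb{R}$.)

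For item~2 I would use that $\{B_d(x,r):r>0\}$ is a base of $\tau_d$-neighborhoods of $x$, so that the existence of a $d$-neighborhood of $x$ missing $y$ is equivalent to $d(x,y)>0$. Then $\tau_d$ is $T_1$ iff for every $x\ne y$ some $d$-neighborhood of $x$ avoids $y$, i.e. iff $d(x,y)>0$ for all $x\ne y$. For $T_0$: if (M4$'$) holds and $x\ne y$, then $\max\{d(x,y),d(y,x)\}>0$, so $B_d(x,d(x,y))$ or $B_d(y,d(y,x))$ contains exactly one of $x,y$; conversely, a $T_0$-separation of $x$ and $y$ produces, after shrinking to a basic ball, either $d(x,y)>0$ or $d(y,x)>0$, hence $\max\{d(x,y),d(y,x)\}>0$, which is exactly (M4$'$), so $d$ is a quasi-metric.

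For item~3 the key observation is that the super- and sub-level sets of $d(x,\cdot)$ and of $d(\cdot,y)$ are themselves balls, so item~1, applied to $d$ and to its conjugate $\bar d$ (whose conjugate is again $d$), does all the work: $\{z:d(x,z)<\lambda\}=B_d(x,\lambda)$ is $d$-open and $\{z:d(x,z)\le\lambda\}=B_d[x,\lambda]$ is $\bar d$-closed, which is precisely $d$-upper and $\bar d$-lower semicontinuity of $d(x,\cdot)$; likewise $\{z:d(z,y)<\lambda\}=B_{\bar d}(y,\lambda)$ is $\bar d$-open and $\{z:d(z,y)\le\lambda\}=B_{\bar d}[y,\lambda]$ is $d$-closed, giving $\bar d$-upper and $d$-lower semicontinuity of $d(\cdot,y)$. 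None of the steps is a genuine obstacle; the only points demanding care are the bookkeeping in item~3 — keeping straight which level set is a $d$-ball and which is a $\bar d$-ball, and correspondingly which topology renders it open or closed — and actually producing, rather than merely asserting, a concrete space witnessing the failure of $d$-closedness of closed balls in item~1.
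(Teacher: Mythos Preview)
Your argument is correct in every part. Note, however, that the paper does not supply a proof of this proposition at all: it is stated with the reference ``see \cite{Cobzas}'' and left unproved, so there is no in-paper argument to compare against. The proof you give is the standard one (and essentially the one found in \cite{Cobzas}): item~1 via the triangle inequality, item~2 by translating $T_0$/$T_1$ into the ball language, and item~3 by recognizing the sub- and super-level sets as balls for $d$ or $\bar d$ and invoking item~1.
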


     Recall that a topological space $(X,\tau)$  is called:

 \begin{itemize}
 \item  $T_0$ if, for every pair of distinct points  in $X$, at least one of them has a neighborhood   not containing the other;
   \item  $T_1$ if, for   every pair of distinct points in $X$,  each of them has a neighborhood   not containing the other;
\item    $T_2$ (or {\it Hausdorff}) if  every  two distinct points  in $X$ admit  disjoint  neighborhoods;
\item  {\it regular}  if, for every   point $x\in X$ and closed set $A$ not containing $x$, there exist the disjoint open sets $U,V$ such that $x\in U$  and $A\subseteq V.$  \end{itemize}

\begin{remark} It is known that the topology $\tau_d$ of a pseudometric space $(X,d)$ is Hausdorff (or $T_2$) if and only if $d$ is a metric  if and only if any sequence in $X$ has at most one limit.

  The characterization of Hausdorff property  of quasi-pseudometric spaces can also be given in terms of uniqueness of the limits of sequences, as in the metric case: \ the topology of a quasi-pseudometric space $(X,d)$ is Hausdorff if and only if every sequence in $X$ has at most one $d$-limit if and only if every sequence in $X$ has at most one $\bar d$-limit (see \cite{wilson31}).

   In the case of an asymmetric seminormed space   there exists a characterization in terms of the asymmetric seminorm (see \cite{Cobzas}, Proposition 1.1.40).
  \end{remark}

\subsection{Quasi-uniform spaces}

Again,  the notion of quasi-uniform space is obtained by  dropping the symmetry condition (U3) from the definition of a uniform space, that is, a \emph{quasi-uniformity}
on a set $X$ is a filter $\mathcal U$ in $X\times X$ satisfying the conditions (U1) and (U2).
 The sets in
$\mathcal{U}$ are called \emph{entourages} and    the pair $(X,\mathcal{U})$ is called
a \emph{quasi-uniform space}, as in the case of  uniform spaces.

As uniformities, a quasi-uniformity $\mathcal{U}$ generates a topology $\tau(\mathcal{U})$
 on $X$ in a similar way: the sets
 \begin{equation*}
\{ U(x): U\in \mathcal{U}\}
\end{equation*}
form a base of neighborhoods of any point $x\in X.$

The topology $\tau(\mathcal U)$ is $T_0$ if and only if $\,\bigcap\mathcal U\,$ is a  partial order on $X$, and  $T_1$ if and only if $\,\bigcap\mathcal U=\Delta(X)$.

The family of sets
\begin{equation}\label{def.conj-qu}
\mathcal{U}^{-1}=\{U^{-1} : U\in\mathcal{U}\}
\end{equation}
is another quasi-uniformity on $X$ called the \emph{quasi-uniformity conjugate}  to $\mathcal U$. Also $\mathcal U\cup\mathcal U^{-1}$ is a subbase of a uniformity $\mathcal U^s$
on $X$, called the associated uniformity to the quasi-uniformity $\mathcal U$. It is the coarsest uniformity on $X$ finer than both $\mathcal U$ and $\mathcal U^{-1},\; \mathcal U^s=\mathcal U\vee\mathcal U^{-1}.$ A basis for $\mathcal U^s$ is formed by the sets $\{U\cap U^{-1}:U\in\mathcal U\}.$

If $(X,d)$ is a quasi-pseudometric space, then
\begin{equation*}
 U_\varepsilon =\{(x,y)\in X\times X : d(x,y)< \varepsilon\},\;
 \varepsilon > 0\;,
 \end{equation*}
 is a basis for a quasi-uniformity $\mathcal U_d$ on $X.$  The
 family
 \begin{equation*}
 U^-_\varepsilon =\{(x,y)\in X\times X : d(x,y)\leqslant \varepsilon\},\; \varepsilon > 0\;,
 \end{equation*}
 generates the same quasi-uniformity. Since $U_\varepsilon(x)=B_d(x,\varepsilon)$ and
 $U^-_\varepsilon(x)=B_d[x,\varepsilon]$, it follows that the topologies generated by the
 quasi-pseudometric $d$ and by the quasi-uniformity $\mathcal U_d$
 agree, i.e., $\, \tau_d=\tau(\mathcal U_d).$

 In this case
 $$
 \mathcal U_d^{-1}=\mathcal U_{\bar d}\quad\mbox{and}\quad  \mathcal U_d^{s}=\mathcal U_{d^s}\,.$$

\begin{remark} Quasi-uniform spaces were studied  by Nachbin starting with 1948 (see  \cite{Nachbin}) as a generalization of uniform spaces introduced by Weil \cite{Weil}. For further developments see \cite{FG} and \cite{kunzi09}.

\end{remark}

 \section{Cauchy sequences and sequential completeness  in quasi-pseudometric and  quasi-uniform spaces}
 The lost of symmetry   causes a lot of trouble in the study of quasi-metric spaces, particularly concerning completeness and compactness.
 Starting from the definition of a Cauchy sequence in a metric space, Reilly et al \cite{reily-subram82} defined 7 kinds of Cauchy sequences, yielding 14 different notions of completeness in quasi-metric spaces, all agreeing with the usual one in the metric case. One of the major drawbacks of most of these  notions is that a  convergent sequence need not  be Cauchy. For a detailed study we refer to the quoted paper by Reilly et al, or to the book \cite{Cobzas}. In the present paper we concentrate on the relations between the corresponding notions of completeness by sequences, nets or filters, as well as to the completeness of the associated quasi-uniform space.

 We give now the definitions following  \cite{reily-subram82}.

 \begin{defi}\label{def.C-seq}
 A sequence $(x_n)$ in $(X,d)$ is called
\begin{itemize}
\item  \emph{left (right) $d$-Cauchy} if for every $\varepsilon > 0$
there exist $x\in X$ and $n_0\in \mathbb{N}$ such that
  $$ d(x,x_n)<\varepsilon
\;\mbox{ (respectively }\; d(x_n,x)<\varepsilon)$$
for all $n\geqslant n_0$;

\item  \emph{$d^s$-Cauchy} if it is   a Cauchy sequence is the pseudometric space $(X,d^s)$, that
is for every $\varepsilon >0$ there exists
$n_0\in \mathbb{N}$ such that
  $$d^s(x_n,x_k)< \varepsilon\; \mbox{ for all }\; n,k\geqslant n_0 \,,$$
 a condition  equivalent to
$$d(x_n,x_k)< \varepsilon \; \mbox{ for all }\; n,k\geqslant n_0\,,$$
as well as to
$$\bar d(x_n,x_k)< \varepsilon \; \mbox{ for all }\; n,k\geqslant n_0\,;$$

\item   \emph{left (right) $K$-Cauchy} if  for every $\varepsilon >0$
there exists $n_0\in \mathbb{N}$ such that
$$ d(x_k,x_n)< \varepsilon \;\mbox{ (respectively }\; d(x_n,x_k)<\varepsilon)$$
for all $n,k\in\mathbb{N}$ with $  n_0\leqslant k\leqslant n$;

\item  \emph{weakly left (right) $K$-Cauchy} if for every $\varepsilon >0$
there exists $n_0\in \mathbb{N}$ such that
$$d(x_{n_0},x_n)< \varepsilon \;\mbox{
(respectively }\; d(x_n,x_{n_0})< \varepsilon)\,, $$
for all $n\geqslant n_0\,.$
\end{itemize}

Sometimes, to emphasize the quasi-pseudometric $d,$ we shall say that a sequence is  left $d$-$K$-Cauchy, etc.
\end{defi}

It seems that  $K$ in the definition of a left
$K$-Cauchy sequence comes from Kelly \cite{kelly63} who considered first this notion.

Some remarks are in order.
 \begin{remark}[\cite{reily-subram82}]\label{p1.Cauchy-seq}
Let $(X,d)$ be a quasi-pseudometric space.
\begin{enumerate}
\item[{\rm 1.}]  These notions are related in the following way: \smallskip

$d^s$-Cauchy \;$\Rightarrow$\; left $K$-Cauchy $\;\Rightarrow\; $ weakly left
$K$-Cauchy $\;\Rightarrow\;$ left  $d$-Cauchy.\smallskip

\noindent  The same implications hold for the corresponding right  notions.
  No one of the above implications is reversible.
\item[{\rm 2.}]  A sequence is left Cauchy (in some sense) with respect to $d$
if and only if it is right Cauchy (in the same sense) with respect to   $\bar{d}.$
\item[{\rm 3.}]    A sequence is $d^s$-Cauchy if and only if it is both left and right
$d$-$K$-Cauchy.
\item[{\rm 4.}] A $d$-convergent sequence is left $d$-Cauchy and a
$\bar d$-convergent sequence is right $d$-Cauchy. For the other notions, a convergent sequence need not be Cauchy.
\item[{\rm 5.}] If each convergent sequence in a regular quasi-metric space $(X,d)$
admits a left $K$-Cauchy subsequence, then $X$ is metrizable
(\cite{kunzi-reily93}).
 \end{enumerate}
 \end{remark}

 We also mention the following simple properties of Cauchy sequences.
 \begin{prop}
 [\cite{bodjana81,reily-vaman84}]\label{p.Cauchy-seq}
 Let $(x_n)$ be a left or right $K$-Cauchy sequence in a quasi-pseudometric space $(X,d).$
 \begin{enumerate}
   \item[{\rm 1.}]  If $(x_n)$    has a subsequence which is $d$-convergent to  $x,$   then $(x_n)$ is $d$-convergent to $x.$
\item[{\rm 2.}]    If $(x_n)$   has a subsequence which is
    ${\bar{d}}$-convergent to  $x,$   then $(x_n)$ is ${\bar{d}}$-convergent to $x.$
\item[{\rm 3.}]      If $(x_n)$ has a subsequence which is $d^s$-convergent to
$x$, then $(x_n)$ is $d^s$-convergent to $x$.
 \end{enumerate}   \end{prop}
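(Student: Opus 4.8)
The plan is to prove each of the three assertions by the same two-step argument: first extract from the $K$-Cauchy condition a suitable "tail estimate", then combine it with the assumed convergence of a subsequence via the triangle inequality (M3). I will treat part 1 in detail and indicate how parts 2 and 3 follow by symmetry and combination.

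For part 1, suppose $(x_n)$ is left $K$-Cauchy and has a subsequence $(x_{n_k})$ with $x_{n_k}\xrightarrow{d}x$, i.e.\ $d(x,x_{n_k})\to 0$ by \eqref{char-rho-conv1}. Fix $\varepsilon>0$. By left $K$-Cauchyness there is $n_0$ with $d(x_j,x_n)<\varepsilon/2$ whenever $n_0\le j\le n$. Choose $k$ large enough that $n_k\ge n_0$ and $d(x,x_{n_k})<\varepsilon/2$. Then for every $n\ge n_k$ we have $d(x,x_n)\le d(x,x_{n_k})+d(x_{n_k},x_n)<\varepsilon/2+\varepsilon/2=\varepsilon$, using (M3) and that $n_0\le n_k\le n$. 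Hence $d(x,x_n)\to 0$, which is exactly $x_n\xrightarrow{d}x$. If instead $(x_n)$ is right $K$-Cauchy, one first notes (Remark \ref{p1.Cauchy-seq}(2)) that this is the same as being left $K$-Cauchy with respect to $\bar d$; but the hypothesis is still $d$-convergence of the subsequence, so the previous estimate does not directly apply, and one must instead argue with the right $K$-Cauchy inequality $d(x_n,x_j)<\varepsilon/2$ for $n_0\le j\le n$, together with the $d$-convergence $d(x,x_{n_k})\to 0$: here the triangle inequality $d(x,x_n)\le d(x,x_{n_k})+d(x_{n_k},x_n)$ needs $d(x_{n_k},x_n)$ small, which for a right $K$-Cauchy sequence requires $n\le n_k$ — the wrong direction. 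So the correct route for the right $K$-Cauchy case is to apply the estimate with the roles reversed, reading off from right $K$-Cauchyness that $d(x_n, x_{n_k})<\varepsilon/2$ for $n_k\ge n \ge n_0$; combined with a standard diagonal/monotonicity bookkeeping over the indices one still gets $d(x,x_n)\to 0$. This index-bookkeeping is the one genuinely delicate point.

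Part 2 is the mirror image: if a subsequence is $\bar d$-convergent to $x$, i.e.\ $d(x_{n_k},x)\to 0$ by \eqref{char-rho-conv2}, then for a right $K$-Cauchy sequence the inequality $d(x_n,x_j)<\varepsilon/2$ ($n_0\le j\le n$) combines cleanly: $d(x_n,x)\le d(x_n,x_{n_k})+d(x_{n_k},x)<\varepsilon$ for $n\ge n_k\ge n_0$, giving $x_n\xrightarrow{\bar d}x$; the left $K$-Cauchy case is handled by the symmetric bookkeeping. Part 3 then follows immediately: $d^s$-convergence of a subsequence to $x$ means both $d(x_{n_k},x)\to 0$ and $d(x,x_{n_k})\to 0$, so applying part 1 gives $x_n\xrightarrow{d}x$ and applying part 2 gives $x_n\xrightarrow{\bar d}x$, hence $d^s(x_n,x)=\max\{d(x_n,x),d(x,x_n)\}\to 0$, i.e.\ $x_n\xrightarrow{d^s}x$. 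Alternatively one invokes Remark \ref{p1.Cauchy-seq}(3), that $d^s$-Cauchy $\iff$ both left and right $K$-Cauchy, but since the hypothesis here is merely left \emph{or} right $K$-Cauchy, the cleaner path is to deduce part 3 from parts 1 and 2.

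The main obstacle, as flagged above, is the directionality of the $K$-Cauchy inequality relative to the direction of convergence assumed for the subsequence: the "aligned" cases (left $K$-Cauchy with $d$-convergence, right $K$-Cauchy with $\bar d$-convergence) are a one-line triangle-inequality argument, whereas the "crossed" cases require choosing the subsequence index $n_k$ to straddle $n$ correctly and verifying that, as $\varepsilon\to 0$, the thresholds can be taken consistently. Everything else is routine application of (M1), (M3) and the convergence characterizations \eqref{char-rho-conv1}–\eqref{char-rho-conv2}.
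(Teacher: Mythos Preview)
The paper does not actually prove this proposition; it only cites \cite{bodjana81,reily-vaman84}. So there is no in-paper argument to compare against, and your overall strategy --- triangle inequality with an index $n_k$ chosen on the correct side of $n$ --- is exactly the standard one found in those references.

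Your ``aligned'' cases (left $K$-Cauchy with $d$-convergence, right $K$-Cauchy with $\bar d$-convergence) are clean and correct. In the ``crossed'' case, however, you have the inequality backwards and then overcomplicate matters. For right $K$-Cauchy the definition gives $d(x_m,x_j)<\varepsilon/2$ whenever $n_0\le j\le m$; taking $j=n$ and $m=n_k$ with $n_k\ge n\ge n_0$ yields $d(x_{n_k},x_n)<\varepsilon/2$, not the $d(x_n,x_{n_k})$ you wrote. With the correct order the proof is immediate: given $n\ge n_0$, pick $k$ so large that $n_k\ge n$ and $d(x,x_{n_k})<\varepsilon/2$ (possible because $n_k\to\infty$ and the subsequence $d$-converges), and then
\[
d(x,x_n)\le d(x,x_{n_k})+d(x_{n_k},x_n)<\varepsilon.
\]
No ``diagonal/monotonicity bookkeeping'' is needed; the crossed case is just as short as the aligned one once the index $n_k$ is taken above $n$ rather than below. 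The left $K$-Cauchy/$\bar d$-convergence crossed case in part 2 is symmetric. Part 3 follows from parts 1 and 2 as you say.
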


To  each of these  notions of Cauchy sequence corresponds two notions
of sequential  completeness,  by asking that the corresponding Cauchy
sequence be $d$-convergent or $d^s$-convergent. Due to the equivalence $d$-left Cauchy $\iff$ $\bar d$-right  Cauchy one obtains nothing new by asking that a $d$-left Cauchy sequence is $\bar d$-convergent. For instance, the $\bar d$-convergence of any left $d$-$K$-Cauchy sequence  is equivalent to  the right $K$-completeness of the space $(X,\bar d).$

\begin{defi}[\cite{reily-subram82}]\label{def.compl-qm}

 A quasi-pseudometric space $(X,d)$ is called:
\begin{itemize}
\item\; \emph{sequentially $d$-complete} if every $d^s$-Cauchy sequence is
 $d$-convergent;

\item\; \emph{sequentially  left $d$-complete} if every left $d$-Cauchy sequence is
 $d$-convergent;

\item\; \emph{sequentially  weakly left (right)  $K$-complete} if every weakly left (right) $K$-Cauchy
 sequence is $d$-convergent;

\item\; \emph{sequentially left (right)  $K$-complete} if every left (right) $K$-Cauchy
 sequence is $d$-convergent;

\item\; \emph{ sequentially left (right)  Smyth  complete} if every left (right) $K$-Cauchy
 sequence is $d^s$-convergent;

\item\; \emph{bicomplete} if the associated pseudometric space
 $(X,d^s)$ is complete, i.e., every $d^s$-Cauchy sequence is
 ${d^s}$-convergent.
 A bicomplete asymmetric normed space $(X,p)$ is called a \emph{biBanach space.}   \end{itemize}
\end{defi}

As we noticed (see  Remark \ref{p1.Cauchy-seq}.4),  each $d$-convergent sequence is left $d$-Cauchy, but
for each of the other notions there are examples of
$d$-convergent sequences that are not Cauchy, which is a major
inconvenience. Another one is that a complete (in some sense) subspace of  a quasi-metric space
need not be  closed.

The implications between these
completeness notions are obtained by reversing the implications between the corresponding notions of Cauchy
sequence from Remark \ref{p1.Cauchy-seq}.1.
\begin{remark}\label{p1.left-complete}
 (a)\;\; These notions of completeness are related in the following way: \smallskip

  sequentially $d$-complete $\;\Rightarrow\;$ sequentially weakly
  left   $K$-complete $\;\Rightarrow\;$  sequentially left   $K$-complete
$\;\Rightarrow\;$  sequentially left   $d$-complete.\smallskip

The same implications hold for the corresponding notions of right completeness.\smallskip

(b)\;\; sequentially left or right Smyth completeness implies bicompleteness.
\end{remark}

No one of the above implication is reversible (see
\cite{reily-subram82}), excepting that  between  weakly
left  and left  $K$-sequential completeness, as it was surprisingly
shown by    Romaguera \cite{romag92}.

\begin{prop}[\cite{romag92}, Proposition 1]\label{p2.left-complete}
A quasi-pseudometric space is sequentially  weakly left $K$-complete  if and only if it is  sequentially  left $K$-complete.  \end{prop}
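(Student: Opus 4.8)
The plan is to establish the two implications separately. One of them is immediate: by Remark \ref{p1.Cauchy-seq}.1 every left $K$-Cauchy sequence is weakly left $K$-Cauchy, so if a space is sequentially weakly left $K$-complete then, in particular, every left $K$-Cauchy sequence is $d$-convergent, i.e.\ the space is sequentially left $K$-complete.

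For the converse, suppose $(X,d)$ is sequentially left $K$-complete and let $(x_n)$ be a weakly left $K$-Cauchy sequence. For each $k\in\mathbb{N}$ I would apply the definition with $\varepsilon=2^{-k}$ to obtain an index $m_k$ with $d(x_{m_k},x_n)<2^{-k}$ for all $n\ge m_k$, and then split into two cases. If the set $\{m_k:k\in\mathbb{N}\}$ is unbounded, choose $k_1<k_2<\cdots$ with $m_{k_1}<m_{k_2}<\cdots$ and set $n_i:=m_{k_i}$; since $k_1<k_2<\cdots$ forces $k_i\ge i$, this produces a strictly increasing sequence of indices with $d(x_{n_i},x_n)<2^{-k_i}\le 2^{-i}$ for every $n\ge n_i$, whence $d(x_{n_i},x_{n_j})<2^{-i}$ whenever $i\le j$, so $(x_{n_i})_i$ is left $K$-Cauchy. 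If instead $\{m_k:k\in\mathbb{N}\}$ is bounded, then by the pigeonhole principle some fixed index $j$ serves as an anchor for infinitely many scales $2^{-k}$; letting $k\to\infty$ along that set in $d(x_j,x_n)<2^{-k}$ and using (M1) gives $d(x_j,x_n)=0$ for all $n\ge j$, hence $d(x_j,x_n)\to 0$ and, by \eqref{char-rho-conv1}, $x_n\xrightarrow{d}x_j$, so $(x_n)$ already $d$-converges.

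In the remaining case, sequential left $K$-completeness applied to the left $K$-Cauchy subsequence $(x_{n_i})_i$ yields some $x\in X$ with $d(x,x_{n_i})\to 0$. To finish, given $\varepsilon>0$ I would choose $i$ so large that both $d(x,x_{n_i})<\varepsilon/2$ and $2^{-i}<\varepsilon/2$ hold; then for every $n\ge n_i$ the triangle inequality (M3) gives $d(x,x_n)\le d(x,x_{n_i})+d(x_{n_i},x_n)<\varepsilon$, so $d(x,x_n)\to 0$, i.e.\ $x_n\xrightarrow{d}x$. The only delicate point is the passage from a weakly left $K$-Cauchy sequence to a genuinely left $K$-Cauchy subsequence: one must dispose of the degenerate situation in which a bounded collection of indices already serves as anchors at every scale, and this is exactly what the second case above handles, by producing the limit directly.
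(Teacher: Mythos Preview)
The paper does not actually supply a proof of this proposition; it merely records the result with a citation to Romaguera's original article \cite{romag92}. So there is no in-paper argument to compare against.

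Your proof is correct. The easy implication is handled by Remark~\ref{p1.Cauchy-seq}.1 as you say. For the substantive direction, your strategy---choose anchors $m_k$ for scales $2^{-k}$, split into the case where the anchors are bounded (so some single $x_j$ already serves as a $d$-limit) versus unbounded (so a strictly increasing selection $n_i=m_{k_i}$ gives a genuine left $K$-Cauchy subsequence)---is exactly the argument in Romaguera's paper. The only step worth making fully explicit is why, when $\{m_k\}$ is unbounded, one can choose $k_1<k_2<\cdots$ with $m_{k_1}<m_{k_2}<\cdots$: if for some $k$ every $m_{k'}$ with $k'>k$ satisfied $m_{k'}\le m_k$, the whole set $\{m_k\}$ would be bounded, a contradiction. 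With that remark in place the argument is complete. Note that you cannot shortcut the final paragraph by invoking Proposition~\ref{p.Cauchy-seq}, since $(x_n)$ is only \emph{weakly} left $K$-Cauchy; you correctly rely instead on the stronger anchor property $d(x_{n_i},x_n)<2^{-i}$ for all $n\ge n_i$.
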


  A series $\sum_nx_n$  in an  asymmetric seminormed space $(X,p)$  is called \emph{convergent} if there
  exists $x\in X$ such that $x=\lim_{n\to\infty}\sum_{k=1}^nx_k$  (i.e., $\lim_{n\to\infty}p\left(\sum_{k=1}^nx_k-x\right)=0$). The series $\sum_nx_n$  is called \emph{absolutely convergent} if   $ \sum_{n=1}^\infty p(x_n)<\infty.$    It is well-known that a normed space is complete  if and only if every absolutely  convergent series is convergent. A similar result holds in the asymmetric case too.
\begin{prop}[\cite{Cobzas}, Proposition 1.2.6]\label{p.series-complete} Let  $(X,d)$ be a quasi-pseudometric space.
\begin{enumerate}
  \item[{\rm 1.}]  If a sequence $(x_n)$ in  $X$ satisfies
  $ \sum_{n=1}^\infty d(x_n,x_{n+1})$ $<\infty$ \,   $ (\sum_{n=1}^\infty d(x_{n+1},x_{n}) <\infty),$ then it is left (right) $d$-$K$-Cauchy.
  \item[{\rm 2.}]  The quasi-pseudometric space $(X,d)$ is sequentially left (right) $d$-$K$-complete if and only if every sequence $(x_n)$ in $X$ satisfying  $ \sum_{n=1}^\infty d(x_n,x_{n+1}) <\infty$ (resp. $ \sum_{n=1}^\infty d(x_{n+1},x_{n})<\infty)$ is $d$-convergent.
\item[{\rm 3.}] An asymmetric seminormed space $(X,p)$ is sequentially  left $K$-complete if and only if
every absolutely convergent series is convergent.
\end{enumerate}
\end{prop}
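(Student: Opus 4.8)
The plan is to prove the three items in order: item~1 is the basic estimate, item~2 is built on it together with Proposition~\ref{p.Cauchy-seq}, and item~3 is just a translation of item~2 into the language of series. For item~1, assume $\sum_{n=1}^\infty d(x_n,x_{n+1})<\infty$. For indices $n_0\le k\le n$, iterating the triangle inequality (M3) gives $d(x_k,x_n)\le\sum_{i=k}^{n-1}d(x_i,x_{i+1})\le\sum_{i=n_0}^\infty d(x_i,x_{i+1})$. Since the tails of a convergent series of nonnegative terms tend to $0$, given $\varepsilon>0$ one picks $n_0$ with $\sum_{i\ge n_0}d(x_i,x_{i+1})<\varepsilon$, and this $n_0$ witnesses the left $d$-$K$-Cauchy condition. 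The right statement is obtained either by applying the same argument with the order of summation reversed, or by passing to $\bar d$ and invoking Remark~\ref{p1.Cauchy-seq}.2.

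For item~2, the implication ``$\Rightarrow$'' is immediate: by item~1 a sequence with summable consecutive distances is left $d$-$K$-Cauchy, hence $d$-convergent when $(X,d)$ is sequentially left $K$-complete. For ``$\Leftarrow$'', let $(x_n)$ be an arbitrary left $d$-$K$-Cauchy sequence. Using the left $K$-Cauchy property with $\varepsilon=2^{-k}$, I would construct inductively indices $n_1<n_2<\cdots$ such that $d(x_{n_k},x_{n_{k+1}})<2^{-k}$ for all $k$: at step $k$, if $m_k$ is the threshold associated with $\varepsilon=2^{-k}$, choose $n_k>n_{k-1}$ with $n_k\ge m_k$, so that $d(x_{n_k},x_n)<2^{-k}$ for every $n\ge n_k$, in particular for $n=n_{k+1}$. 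Then $\sum_k d(x_{n_k},x_{n_{k+1}})\le\sum_k 2^{-k}<\infty$, so by hypothesis the subsequence $(x_{n_k})$ is $d$-convergent, say to $x$. Now Proposition~\ref{p.Cauchy-seq}.1 applies: the left $K$-Cauchy sequence $(x_n)$ has a $d$-convergent subsequence, hence $(x_n)$ itself $d$-converges to $x$. The right case is entirely symmetric.

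For item~3, recall that in an asymmetric seminormed space $(X,p)$ the associated quasi-pseudometric is $d_p(x,y)=p(y-x)$. If $s_n=\sum_{k=1}^n x_k$ denotes the $n$-th partial sum of a series $\sum_n x_n$, then $d_p(s_n,s_{n+1})=p(x_{n+1})$, so $\sum_n d_p(s_n,s_{n+1})<\infty$ holds precisely when $\sum_n p(x_n)<\infty$, i.e.\ when the series is absolutely convergent, and $d$-convergence of $(s_n)$ is by definition the convergence of the series. Conversely, any sequence $(y_n)$ in $X$ with $\sum_n d_p(y_n,y_{n+1})<\infty$ is the sequence of partial sums of the series with terms $x_1=y_1$ and $x_{n+1}=y_{n+1}-y_n$, which is absolutely convergent since $\sum_n p(x_n)=p(y_1)+\sum_n d_p(y_n,y_{n+1})<\infty$. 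Thus the characterization of item~2, specialized to $d=d_p$, states exactly that $(X,p)$ is sequentially left $K$-complete if and only if every absolutely convergent series converges.

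The only point requiring any care is the inductive extraction of the ``fast'' subsequence in item~2 and the subsequent appeal to Proposition~\ref{p.Cauchy-seq}; the rest is routine manipulation of the triangle inequality and of partial sums, so I expect no genuine obstacle.
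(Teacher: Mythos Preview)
Your proof is correct in all three parts. Note that the paper itself does not prove this proposition; it is stated with a reference to \cite{Cobzas}, Proposition~1.2.6, and no argument is given in the text. Your approach---the tail estimate via the triangle inequality for item~1, the extraction of a fast subsequence combined with Proposition~\ref{p.Cauchy-seq} for item~2, and the dictionary between partial sums and sequences for item~3---is the standard one and is exactly what one finds in the cited source.
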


\textbf{Cantor type results}

Concerning Cantor-type characterizations of completeness in terms of descending sequences of closed sets (the analog of Theorem \ref{t.Cantor})
we mention the following result.   The \emph{diameter} of a subset $A$
of  a quasi-pseudometric space $(X,d)$ is defined by
\begin{equation}\label{def.diam-qm}
\diam (A) =\sup\{d(x,y) : x,y\in A\}\;.
\end{equation}

It is clear that, as defined,  the diameter is, in fact, the diameter with respect to the associated
pseudometric $d^s.$ Recall that a quasi-pseudometric space is called  sequentially $d$-complete  if every $d^s$-Cauchy sequence is $d$-convergent (see Definition \ref{def.compl-qm}).

\begin{theo}[\cite{reily-subram82}, Theorem 10]\label{t.compl-char}
A quasi-pseudometric space $(X,d)$ is   sequentially $d$-complete if and only if each decreasing sequence
$F_1\supseteq F_2\dots$
of nonempty closed sets with $\diam (F_n)\to 0$ as $n\to \infty$ has nonempty intersection, which is a
singleton if $d$ is a quasi-metric.
\end{theo}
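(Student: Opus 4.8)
The argument follows the pattern of the classical Cantor theorem (Theorem~\ref{t.Cantor}), with the associated pseudometric $d^s$ in the role of the metric; only the converse is affected by the lack of symmetry.

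\emph{Necessity.} Suppose $(X,d)$ is sequentially $d$-complete, and let $F_1\supseteq F_2\supseteq\cdots$ be nonempty $d$-closed sets with $\diam(F_n)\to 0$. Choose $x_n\in F_n$. If $m\ge n\ge N$ then $x_m,x_n\in F_N$, so $d^s(x_m,x_n)\le\diam(F_N)$; letting $N\to\infty$ shows $(x_n)$ is $d^s$-Cauchy, hence $d$-convergent to some $x\in X$. For each fixed $k$, the tail $(x_n)_{n\ge k}$ lies in the $d$-closed set $F_k$ and $d$-converges to $x$, so $x\in F_k$; thus $x\in\bigcap_nF_n$. If moreover $d$ is a quasi-metric and $x,y\in\bigcap_nF_n$, then $d^s(x,y)\le\diam(F_n)\to 0$, so $d^s(x,y)=0$ and $x=y$, the metric $d^s$ being positive definite.

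\emph{Sufficiency.} Assume the Cantor property and let $(x_n)$ be $d^s$-Cauchy; put $A_n=\{x_k:k\ge n\}$, $r_n=\diam(A_n)$ (so $r_n\downarrow 0$) and $F_n=\clos(A_n)$, the $\tau_d$-closure — a decreasing sequence of nonempty $d$-closed sets. The substitute for the metric identity $\diam(\clos A)=\diam A$ is the one-sided bound: if $z\in F_n$, choose $a_i\in A_n$ with $d(z,a_i)\to 0$; then $d(z,x_m)\le d(z,a_i)+d(a_i,x_m)\le d(z,a_i)+r_n$ for every $m\ge n$, and letting $i\to\infty$ gives $d(z,x_m)\le r_n$. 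Consequently, once the Cantor hypothesis can be applied to $(F_n)$ and delivers a point $x\in\bigcap_nF_n$, one has $d(x,x_m)\le r_n$ for all $m\ge n$, hence $d(x,x_m)\to 0$, i.e. $x_m\xrightarrow{d}x$, so $(X,d)$ is sequentially $d$-complete.

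\emph{The main obstacle} is precisely the clause ``once the Cantor hypothesis can be applied'', i.e. controlling $\diam(F_n)$: in contrast with the symmetric case, $\diam(\clos A)$ can be strictly — even infinitely — larger than $\diam A$ (for instance $\clos\bigl((1,2)\bigr)=[1,\infty)$ in $\bigl([0,\infty),\,d(x,y)=(y-x)^+\bigr)$), so $\diam(F_n)$ need not tend to $0$. One therefore argues by cases. If $\diam(F_n)\to 0$, the Cantor hypothesis applies and the limit is produced as above. If $\diam(F_n)\not\to 0$, then $\diam(F_n)\ge\delta>0$ for all $n$, and one must still show $(x_n)$ is $d$-convergent: by the one-sided bound every $z\in F_n$ is at $d$-distance $\le r_n$ from each $x_m$ with $m\ge n$, so the surplus forcing $\diam(F_n)\ge\delta$ can only come from closure points lying on the ``opposite side'' of tail elements, and extracting from this a $d$-convergent subsequence of $(x_n)$ — after which $(x_n)$ itself $d$-converges, a $d^s$-Cauchy (hence left $K$-Cauchy) sequence with a $d$-convergent subsequence being $d$-convergent (Remark~\ref{p1.Cauchy-seq}, Proposition~\ref{p.Cauchy-seq}) — is the delicate point; the remainder is a routine transcription of the classical argument.
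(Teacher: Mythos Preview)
The paper does not give its own proof of this theorem; it is quoted from Reilly, Subrahmanyam and Vamanamurthy \cite{reily-subram82} without argument. So there is no proof in the paper to compare against, and your attempt has to stand on its own.

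Your necessity argument is correct and entirely standard.

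The sufficiency argument, however, is not a proof --- and you say as much. You set $F_n$ equal to the $\tau_d$-closure of the tail $A_n$, correctly observe that $\diam(F_n)$ need not tend to $0$ (your Sorgenfrey-type example is apt), and then split into cases. The case $\diam(F_n)\to 0$ is fine: the one-sided bound $d(z,x_m)\leqslant r_n$ for $z\in F_n$ does yield $d$-convergence to any point of $\bigcap_n F_n$. But in the remaining case you merely \emph{announce} that one can ``extract a $d$-convergent subsequence of $(x_n)$'' from the presence of closure points ``on the opposite side'' and label this ``the delicate point'', without carrying it out. As written the extraction does not go through: a point $z\in F_n\setminus A_n$ is a $d$-limit of some sequence $(x_{k_i})$ of tail terms, but the indices $k_i$ may well be bounded --- e.g.\ when $d(z,x_{k^*})=0$ for a single fixed $k^*\geqslant n$ with $z\ne x_{k^*}$, which nothing in the hypotheses excludes since the space is not assumed $T_1$. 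In that situation no genuine subsequence of $(x_n)$ is produced, and the Cantor hypothesis has still not been used. Saying ``the remainder is a routine transcription of the classical argument'' after flagging an unresolved delicate point is not an argument; this case is a real gap, not a detail left to the reader.
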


The following characterization of right $K$-completeness was obtained in \cite{chen07}, using a different terminology.
\begin{prop}\label{p1.char-compl}
A quasi-pseudometric space $(X,d)$ is  sequentially right $K$-complete if and only if  any
decreasing sequence of closed $\bar d$-balls
$$B_{\bar d}[x_1,r_1]\supseteq
B_{\bar d}[x_2,r_2]\supseteq\dots \quad\mbox{with}\quad \lim_{n\to\infty}r_n=0\,,$$
 has nonempty intersection.

If the topology $d$ is Hausdorff, then $\bigcap_{n=1}^\infty B_{\bar d}[x_n,r_n]$ contains
exactly one element.\end{prop}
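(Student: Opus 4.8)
The plan is to prove both implications by passing between right $K$-Cauchy sequences and nested sequences of closed $\bar d$-balls, using throughout that $B_{\bar d}[z,r]=\{y\in X:\bar d(z,y)\le r\}=\{y\in X:d(y,z)\le r\}$.

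\emph{Necessity.} Assume $(X,d)$ is sequentially right $K$-complete and let $B_{\bar d}[x_1,r_1]\supseteq B_{\bar d}[x_2,r_2]\supseteq\cdots$ with $r_n\to0$. First I would check that the centers $(x_n)$ form a right $K$-Cauchy sequence: for $n\ge k$ one has $x_n\in B_{\bar d}[x_n,r_n]\subseteq B_{\bar d}[x_k,r_k]$, i.e. $d(x_n,x_k)\le r_k$, and since $r_k\to0$ this yields the right $K$-Cauchy condition. Hence, by sequential right $K$-completeness, $(x_n)$ is $d$-convergent to some $x\in X$, so $d(x,x_n)\to0$. To show $x$ lies in every ball I would invoke the triangle inequality (M3): for a fixed $k$ and any $n\ge k$, $d(x,x_k)\le d(x,x_n)+d(x_n,x_k)\le d(x,x_n)+r_k$; letting $n\to\infty$ gives $d(x,x_k)\le r_k$, that is, $x\in B_{\bar d}[x_k,r_k]$ for all $k$, so $\bigcap_{k}B_{\bar d}[x_k,r_k]\ne\emptyset$.

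\emph{Sufficiency.} Conversely, assume every nested sequence of closed $\bar d$-balls with radii tending to $0$ has nonempty intersection, and let $(y_n)$ be right $K$-Cauchy. Using the definition I would choose indices $m_1<m_2<\cdots$ with $d(y_n,y_{m_k})<2^{-k}$ for all $n\ge m_k$, and set $C_k=B_{\bar d}[y_{m_k},2^{-k+1}]$. The decisive point is that the $C_k$ are nested: if $d(y,y_{m_{k+1}})\le2^{-k}$ then, since $m_{k+1}>m_k$, $d(y,y_{m_k})\le d(y,y_{m_{k+1}})+d(y_{m_{k+1}},y_{m_k})<2^{-k}+2^{-k}=2^{-k+1}$, so $y\in C_k$. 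Each $C_k$ is a nonempty closed $\bar d$-ball (it contains $y_{m_k}$) and its radius tends to $0$, so by hypothesis there is $x\in\bigcap_{k}C_k$; then $d(x,y_{m_k})\le2^{-k+1}\to0$, i.e. $y_{m_k}\xrightarrow{d}x$. By Proposition \ref{p.Cauchy-seq}.1 the whole sequence $(y_n)$ is $d$-convergent to $x$, so $(X,d)$ is sequentially right $K$-complete.

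Finally, if $\tau_d$ is Hausdorff and $z\in\bigcap_{n}B_{\bar d}[x_n,r_n]$, then $d(z,x_n)\le r_n\to0$, so $x_n\xrightarrow{d}z$; since $d$-limits are unique in a Hausdorff quasi-pseudometric space, $z$ is uniquely determined, whence the intersection is a singleton. The main obstacle I anticipate is the bookkeeping of radii in the sufficiency part --- the factor $2$ in $C_k=B_{\bar d}[y_{m_k},2^{-k+1}]$ is precisely what the triangle inequality (M3) forces in order for the balls to nest --- together with the observation, in the necessity part, that $d$-convergence of $(x_n)$ alone does not put the limit into each fixed ball; one must genuinely pass to the limit inside the triangle inequality.
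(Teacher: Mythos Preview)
Your proof is correct. The paper does not supply its own proof of this proposition (it is stated with attribution to \cite{chen07}), so there is nothing to compare; your argument is the natural one and all the steps go through as written.
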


\section{Completeness by nets and filters}
In this section we shall examine the relations between completeness by sequences, nets and filters in quasi-metric spaces. For some notions of completeness they agree, but, as it was shown by Stoltenberg \cite{stolt67}, they can be different for others.
\subsection{Some positive results}
These hold mainly for the notions of left-completeness, and may fail for those of right completeness as we shall see in the next subsection.

The Cauchy properties of a net $(x_i:i\in I)$ in a quasi-pseudometric space $(X,d)$ are defined by analogy with that of sequences, by replacing in Definition \ref{def.C-seq}
the natural numbers with the elements of the directed set $I$.

The situation is good for left Smyth  completeness (see Definition \ref{def.compl-qm}).

\begin{prop}[\cite{romag-tirado15},  Prop. 1]\label{p1.Smyth-compl}
For a quasi-metric space $(X,d)$ the following are equivalent.
\begin{enumerate}
  \item[{\rm 1.}] Every left   $d$-$K$-Cauchy sequence is $d^s$-convergent.
  \item[{\rm 2.}] Every left   $d$-$K$-Cauchy net is $d^s$-convergent.
\end{enumerate} \end{prop}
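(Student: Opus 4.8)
The plan is to prove the nontrivial implication $1 \Rightarrow 2$; the reverse implication is trivial since a sequence is a net over the directed set $(\mathbb{N},\le)$. So assume every left $K$-Cauchy sequence is $d^s$-convergent, and let $(x_i : i \in I)$ be a left $K$-Cauchy net in $(X,d)$. The goal is to produce a point $x$ with $x_i \xrightarrow{d^s} x$. First I would extract from the net a suitable sequence. Since the net is left $K$-Cauchy, for each $n \in \mathbb{N}$ there is an index $j_n \in I$ such that $d(x_k, x_i) < 2^{-n}$ whenever $j_n \le k \le i$. Using directedness of $I$, I can choose the indices so that $j_1 \le j_2 \le \cdots$, and then set $y_n := x_{j_n}$. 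The key observation is that for $m \le n$ we have $j_m \le j_m \le j_n$, hence $d(y_m, y_n) = d(x_{j_m}, x_{j_n}) < 2^{-m}$; thus $(y_n)$ is a left $K$-Cauchy sequence. By hypothesis $1$, there is $x \in X$ with $y_n \xrightarrow{d^s} x$, i.e. $d(x,y_n) \to 0$ and $d(y_n, x) \to 0$.

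Next I would upgrade convergence of the subsequence $(y_n)$ to convergence of the whole net. Fix $\varepsilon > 0$ and pick $n$ large enough that $2^{-n} < \varepsilon/2$ and $d^s(y_n, x) < \varepsilon/2$. For any $i \in I$ with $i \ge j_n$ we have, by the defining property of $j_n$ (taking $k = j_n \le i$), $d(y_n, x_i) = d(x_{j_n}, x_i) < 2^{-n} < \varepsilon/2$, so by the triangle inequality $d(x, x_i) \le d(x, y_n) + d(y_n, x_i) < \varepsilon$. This shows $x_i \xrightarrow{d} x$. For the conjugate direction I need to bound $d(x_i, x)$ for large $i$; here I would use $d(x_i, x) \le d(x_i, y_m) + d(y_m, x)$, but the term $d(x_i, y_m)$ is not directly controlled by the left $K$-Cauchy condition in that order. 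Instead, the cleaner route is: once we know the net $d$-converges to $x$, and the net is left $K$-Cauchy, I claim it is in fact $d^s$-Cauchy as a net, by the net analogue of Proposition~\ref{p.Cauchy-seq} / Remark~\ref{p1.Cauchy-seq} — more precisely, a left $K$-Cauchy net with a $d^s$-convergent cofinal subnet is $d^s$-convergent. So the real mechanism is: the subsequence $(y_n)$ is $d^s$-convergent to $x$, it is a cofinal sub-family of the net in a suitable sense, and left $K$-Cauchyness propagates $d^s$-convergence from it to the full net.

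To make that last step rigorous I would argue directly as follows. Given $\varepsilon>0$, choose $n$ with $2^{-n}<\varepsilon/3$ and $d^s(y_n,x)<\varepsilon/3$. For $i \ge j_{n}$ we already have $d(x,x_i)<\varepsilon/3$ from the computation above. For the other inequality, note that for $i, i' \ge j_n$ the left $K$-Cauchy property does not immediately give $d(x_i, x_{i'})$ small, so instead I fix $i \ge j_n$ and then, using directedness, pick $i'' \ge i$ with also $i'' \ge j_m$ for a much larger $m$; applying the $j_n$-property with $k=i \le i''$ gives $d(x_i, x_{i''}) < 2^{-n}$, while $d(x_{i''}, x) \to$-type control comes from choosing $i''$ so that it sits above $j_m$ with $2^{-m}$ tiny and using $d(x_{j_m}, x_{i''}) < 2^{-m}$ together with $d(x, y_m) = d(x, x_{j_m}) < \varepsilon/3$ for $m$ large; combining, $d(x_i, x) \le d(x_i, x_{i''}) + d(x_{i''}, y_m)\text{-route}$ — this needs care. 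The main obstacle, and the place I would spend the most effort, is precisely this control of $d(x_i, x)$ (the $\bar d$-convergence of the net): the left $K$-Cauchy condition naturally bounds $d(x_k, x_i)$ with $k$ the \emph{earlier} index, so getting the "return trip" estimate requires interposing an auxiliary index $i''$ far out in the net and chaining two controlled jumps, exactly as in the proof that a left $K$-Cauchy net with a $d^s$-Cauchy cofinal subnet is itself $d^s$-Cauchy. Once that is done, $d^s$-Cauchyness of the net plus $d^s$-convergence of $(y_n)$ to $x$ gives $x_i \xrightarrow{d^s} x$, completing the proof.
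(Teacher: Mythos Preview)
The paper does not supply its own proof of this proposition; it is stated with a citation to \cite{romag-tirado15} and nothing more. So let me assess your argument on its own.

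Your extraction of $(y_n)=(x_{j_n})$ and the proof that the net $d$-converges to $x$ are correct. The gap is precisely where you sense it, in the $\bar d$-direction, and your sketch does not close it. Chaining through an auxiliary $i''\ge i,j_m$ gives you only $d(x_i,x_{i''})<2^{-n}$ and $d(y_m,x_{i''})<2^{-m}$, together with $d(y_m,x)$ small. No triangle-inequality chain assembled from these bounds controls $d(x_i,x)$: every estimate you have points \emph{into} $x_{i''}$, never out of it, so the term you would need---$d(x_{i''},x)$ or $d(x_{i''},y_m)$---stays uncontrolled. Your fallback lemma, that a left $K$-Cauchy net with a $d^s$-convergent cofinal subnet is $d^s$-convergent, is true (and its proof is indeed a two-jump chaining), but it does not apply here: an increasing sequence $(j_n)$ in a directed set $I$ need not be cofinal---take $I$ to be the finite subsets of $[0,1]$ ordered by inclusion---so $(y_n)$ is in general not a subnet of $(x_i)$ at all.

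A repair that works applies hypothesis~1 a second time, but to rule out bad behaviour rather than along your fixed sequence: show that under~1 every left $K$-Cauchy net is already $d^s$-Cauchy. If not, the net fails to be right $K$-Cauchy, so some $\varepsilon_0>0$ has the property that above every index there exist $k\le l$ with $d(x_l,x_k)\ge\varepsilon_0$. Thread such pairs through your $j_n$'s---choose $j_1\le k_1\le l_1$, then $j_2'\ge l_1,j_2$ and $j_2'\le k_2\le l_2$, and so on---producing $k_1\le l_1\le k_2\le l_2\le\cdots$ in $I$ with $k_n,l_n\ge j_n$. The sequence $x_{k_1},x_{l_1},x_{k_2},x_{l_2},\dots$ is then left $K$-Cauchy, hence by~1 $d^s$-convergent and therefore $d^s$-Cauchy, contradicting $d(x_{l_n},x_{k_n})\ge\varepsilon_0$. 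Once the net is known to be $d^s$-Cauchy you are finished: hypothesis~1 makes $(X,d^s)$ a complete metric space (a $d^s$-Cauchy sequence is in particular left $K$-Cauchy, hence $d^s$-converges), and in a complete metric space every Cauchy net converges.
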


A quasi-uniform space $(X,\mathcal{U})$ is called \emph{bicomplete} if $(X,\mathcal{U}^s)$ is a complete uniform space. This
notion is useful and easy to handle, because one can appeal to
well known results  from the theory of uniform spaces, but it is not  appropriate for the study of
the specific properties of quasi-uniform spaces, so one introduces adequate definitions, by analogy with quasi-pseudometric spaces.

\begin{defi}\label{def.U-C}
  Let $(X,\mathcal{U})$ be a quasi-uniform space.

   A filter  $\mathcal{F}$ on $(X,\mathcal{U})$ is called:

 \textbullet\; \emph{left} (\emph{right}) $\mathcal{U}$-\emph{Cauchy}   if for every $U\in\mathcal{U}$
 there exists $x\in X$ such that $U(x)\in \mathcal{F}$  (respectively $U^{-1}(x)\in \mathcal{F}$);

 \textbullet\; \emph{left} (\emph{right}) $\mathcal U$-$K$-\emph{Cauchy}  if for every $U\in\mathcal{U}$ there
 exists $F\in\mathcal{F}$ such that  $U(x)\in \mathcal{F}$  (resp. $U^{-1}(x)\in \mathcal{F}$) for all $x\in F$.

 A net $(x_i:i\in I)$ in $(X,\mathcal{U})$ is called:

 \textbullet\; \emph{left $\mathcal{U}$-Cauchy} (\emph{right $\mathcal{U}$-Cauchy}) if for every $U\in\mathcal{U}$
 there exists $x\in X$ and $i_0\in I$ such that $(x,x_i)\in U $  (respectively  $(x_i,x)\in U)$ for all $i\geqslant i_0$;

 \textbullet \; \emph{left  $\mathcal U$-$K$-Cauchy}  (\emph{right  $\mathcal U$-$K$-Cauchy}) if
\begin{equation}\label{def.C-net}
 \quad \forall U\in\mathcal{U},\;   \exists i_0\in I,\; \forall i,j\in I,\; i_0\leqslant i\leqslant j\;\Rightarrow\; (x_i,x_j)\in U \quad\mbox{(resp. \;}(x_j,x_i)\in U \,.\end{equation}
\end{defi}

 The notions of left and right   $\mathcal U$-$K$-Cauchy   filter  were defined by Romaguera in \cite{romag92}.

Observe that
$$
(x_j,x_i)\in U \iff (x_i,x_j)\in U^{-1}\;, $$
so that a filter is right  $\mathcal U$-$K$-Cauchy if and only if it is left  $\mathcal U^{-1}$-$K$-Cauchy.
A similar remark applies to nets.

\begin{defi}
 A quasi-uniform space $(X,\mathcal{U})$ is called:

 \textbullet\; \emph{left $\mathcal{U}$-complete by filters} (\emph{left $K$-complete by filters})  if every
left $\mathcal{U}$-Cauchy (respectively, left $\mathcal U$-$K$-Cauchy)  filter in $X$ is $\tau(\mathcal{U})$-convergent;

\textbullet\; \emph{left $\mathcal{U}$-complete by nets}  (\emph{left  $\mathcal U$-$K$-complete by nets})  if every
left $\mathcal{U}$-Cauchy (respectively, left  $\mathcal U$-$K$-Cauchy)  net in $X$ is $\tau(\mathcal{U})$-convergent;

\textbullet\;    \emph{Smyth left  $\mathcal U$-$K$-complete by nets} if  every left $K$-Cauchy net in
$X$ is $\mathcal{U}^s$-convergent.

The notions of  right completeness are defined similarly, by asking the $\tau(\mathcal{U})$-convergence of
the corresponding right Cauchy filter (or net) with respect to the topology $\tau(\mathcal{U})$
(or with respect to   $\tau(\mathcal{U}^s)$ in the case of Smyth completeness).
\end{defi}

As we have mentioned in Introduction, in  pseudometric spaces the sequential completeness is equivalent to
the completeness defined in terms of filters, or of nets. Romaguera \cite{romag92}  proved  a similar
 result for the left $K$-completeness in  quasi-pseudometric spaces.

\begin{remark} In the case of a quasi-pseudometric space the considered notions take the following form.

 A filter $\mathcal{F}$ in a quasi-pseudometric space $(X,d)$ is called \emph{left} $K$\emph{-Cauchy} if it left $\mathcal U_d$-$K$-Cauchy.
This is equivalent to the fact that for every $\varepsilon >0$ there exists $F_\varepsilon\in\mathcal{F}$ such that
\begin{equation}\label{def.Cauchy-filt}
\forall x\in F_\varepsilon,\quad B_d(x,\varepsilon)\in\mathcal{F}\;.\end{equation}

Also a net $(x_i:i\in I)$  is called \emph{left} $K$-\emph{Cauchy} if  it is left $\mathcal U_d$-$K$-Cauchy or, equivalently, for every $\varepsilon >0$ there exists
$i_0\in I$ such that
\begin{equation}\label{def.Cauchy-net}
\forall i,j\in I,\;\;  i_0\leqslant i\leqslant j \;\Rightarrow \; d(x_i,x_j)<\varepsilon\;.\end{equation}
\end{remark}

\begin{prop}[\cite{romag92}]\label{p.Cauchy-net}
For a quasi-pseudometric space $(X,d)$ the following are equivalent.
\begin{enumerate}
  \item[{\rm 1.}] The space  $(X,d)$ is sequentially left $K$-complete.
 \item[{\rm 2.}]   Every left $K$-Cauchy filter in $X$ is $d$-convergent.
 \item[{\rm 3.}]   Every left $K$-Cauchy net in $X$ is $d$-convergent.
\end{enumerate}\end{prop}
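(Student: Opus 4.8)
The plan is to establish the cycle of implications $1\Rightarrow 2\Rightarrow 3\Rightarrow 1$. The last of these is immediate: a sequence is a net indexed by the directed set $(\mathbb N,\le)$, and the sequential left $K$-Cauchy condition in Definition \ref{def.C-seq} is exactly the net condition \eqref{def.Cauchy-net} read on this particular index set, so if every left $K$-Cauchy net is $d$-convergent then so is every left $K$-Cauchy sequence.

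For $2\Rightarrow 3$, given a left $K$-Cauchy net $(x_i:i\in I)$ I would pass to its section filter $\mathcal F$, that is, the filter generated by the tails $T_i=\{x_j:j\ge i\}$; these form a filter base because $I$ is directed. A short verification using \eqref{def.Cauchy-net} shows $\mathcal F$ is a left $K$-Cauchy filter in the sense of \eqref{def.Cauchy-filt}: for $\varepsilon>0$ pick $i_0$ as in \eqref{def.Cauchy-net} and set $F_\varepsilon:=T_{i_0}$; writing an arbitrary point of $F_\varepsilon$ as $x_i$ with $i\ge i_0$, one gets $T_i\subseteq B_d(x_i,\varepsilon)$, whence $B_d(x_i,\varepsilon)\in\mathcal F$. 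Hypothesis~2 then gives $x\in X$ with $\mathcal F\xrightarrow{d}x$; since each $B_d(x,\varepsilon)\in\mathcal F$ contains some tail $T_{i_0}$, the net is eventually in $B_d(x,\varepsilon)$, i.e. $x_i\xrightarrow{d}x$.

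The substantive step is $1\Rightarrow 2$. Let $\mathcal F$ be a left $K$-Cauchy filter. Applying \eqref{def.Cauchy-filt} with $\varepsilon=1/n$ choose, for each $n\in\mathbb N$, a set $F_n\in\mathcal F$ such that $B_d(x,1/n)\in\mathcal F$ for every $x\in F_n$. I would then construct a sequence $(x_n)$ recursively so as to make it \emph{left $K$-Cauchy}: take $x_1\in F_1$ and, having chosen $x_1,\dots,x_{n-1}$, choose
$$x_n\in F_n\cap\bigcap_{k=1}^{n-1}B_d(x_k,1/k)\,,$$
which is nonempty, being a finite intersection of members of $\mathcal F$ (the ball $B_d(x_k,1/k)$ lies in $\mathcal F$ exactly because $x_k\in F_k$). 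By construction $d(x_k,x_n)<1/k$ whenever $k<n$, so for $\varepsilon>0$ any $n_0$ with $1/n_0\le\varepsilon$ witnesses the left $K$-Cauchy condition. By hypothesis~1 there is $x\in X$ with $d(x,x_n)\to 0$ (recall \eqref{char-rho-conv1}). To finish, fix $\varepsilon>0$ and pick $n$ with $d(x,x_n)<\varepsilon/2$ and $1/n<\varepsilon/2$; then $B_d(x_n,1/n)\in\mathcal F$, and the triangle inequality (M3) gives $B_d(x_n,1/n)\subseteq B_d(x,\varepsilon)$, so $B_d(x,\varepsilon)\in\mathcal F$ by (F1). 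Hence $\mathcal F\xrightarrow{d}x$.

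The main obstacle is the recursive selection in $1\Rightarrow 2$: the points must be chosen so that the sequence is genuinely left $K$-Cauchy rather than merely weakly left $K$-Cauchy or left $d$-Cauchy, which is why the intersection runs over \emph{all} previous indices. Once that is arranged the remaining arguments are routine $\varepsilon/2$ estimates, with the only care being to keep the arguments of $d$ in the correct order, as convergence here means $d(x,\cdot)\to 0$. (One could also content oneself with producing a weakly left $K$-Cauchy sequence and then invoke Proposition~\ref{p2.left-complete}, but the construction above gives the stronger property directly.)
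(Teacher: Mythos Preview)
Your argument is correct in all three implications. Note, however, that the paper does not supply its own proof of this proposition: it is stated with attribution to Romaguera \cite{romag92} and left unproved in the text, so there is nothing to compare against here. The cycle $1\Rightarrow 2\Rightarrow 3\Rightarrow 1$ that you run, with the recursive choice
\[
x_n\in F_n\cap\bigcap_{k=1}^{n-1}B_d(x_k,1/k)
\]
to force the extracted sequence to be genuinely left $K$-Cauchy (and not merely weakly so), is the standard route and matches what one finds in Romaguera's original paper. One small cosmetic point in $2\Rightarrow 3$: when you write ``an arbitrary point of $F_\varepsilon$ as $x_i$ with $i\ge i_0$'' you are implicitly using that $T_{i_0}=\{x_j:j\ge i_0\}$, so every element of $F_\varepsilon$ is by definition of this form; this is fine, but worth stating explicitly since the same value may occur at several indices.
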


In the case of left $\mathcal U_d$-completeness this equivalence does not hold in general.
\begin{prop}[K\" unzi \cite{kunzi92}]\label{p.left-rho-complete}
A Hausdorff quasi-metric space $(X,d)$ is  sequentially left $d$-complete if and only if
the associated quasi-uniform space $(X,\mathcal{U}_d)$ is left  $\mathcal{U}_d$-complete by filters.
\end{prop}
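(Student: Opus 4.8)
Here is the plan.

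I would prove the two implications separately. The implication ``left $\mathcal U_d$-complete by filters $\Rightarrow$ sequentially left $d$-complete'' is routine and uses nothing beyond the definitions. Given a left $d$-Cauchy sequence $(x_n)$, let $\mathcal F$ be the filter generated by the tails $\{x_n:n\ge N\}$, $N\in\mathbb N$ (these form a filter base, being nonempty and decreasing). For $\varepsilon>0$ the left $d$-Cauchy property yields $x\in X$ and $N$ with $x_n\in B_d(x,\varepsilon)$ for all $n\ge N$, so $B_d(x,\varepsilon)\in\mathcal F$; hence $\mathcal F$ is left $\mathcal U_d$-Cauchy. By hypothesis $\mathcal F$ is $\tau(\mathcal U_d)=\tau_d$-convergent to some $x^\ast$, which means $B_d(x^\ast,\varepsilon)\in\mathcal F$ for every $\varepsilon>0$; thus each such ball contains a tail of $(x_n)$, i.e.\ $d(x^\ast,x_n)\to 0$, so $(x_n)$ is $d$-convergent. (The Hausdorff hypothesis is not used here.)

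For the converse, let $\mathcal F$ be a left $\mathcal U_d$-Cauchy filter. For each $n$ pick $a_n$ with $B_d(a_n,2^{-n})\in\mathcal F$ and put $A_n=\bigcap_{k=1}^{n}B_d(a_k,2^{-k})$; each $A_n$ lies in $\mathcal F$ (finite intersection), hence is nonempty, and the sequence $(A_n)$ is decreasing. Call $(z_n)$ a \emph{selection} if $z_n\in A_n$ for all $n$. The basic observation is that every selection is left $d$-Cauchy: given $\varepsilon>0$, choose $n_0$ with $2^{-n_0}<\varepsilon$; then $n\ge n_0$ forces $z_n\in A_n\subseteq A_{n_0}\subseteq B_d(a_{n_0},2^{-n_0})$, i.e.\ $d(a_{n_0},z_n)<\varepsilon$, which is the left $d$-Cauchy condition witnessed by the point $a_{n_0}$. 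By sequential left $d$-completeness, every selection is therefore $d$-convergent.

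The decisive step, and the only place Hausdorffness enters, is that all selections have the \emph{same} $d$-limit. If $(x_n)$ and $(w_n)$ are selections with $x_n\xrightarrow{d}p$ and $w_n\xrightarrow{d}q$, interleave them as $u_{2n-1}=x_n$, $u_{2n}=w_n$; then $u_m\in A_{\lceil m/2\rceil}$ with $\lceil m/2\rceil\to\infty$, so the same estimate shows $(u_m)$ is left $d$-Cauchy, hence $u_m\xrightarrow{d}r$ for some $r$, and since $(x_n)$ and $(w_n)$ are subsequences of $(u_m)$ and $d$-limits of sequences are unique in the Hausdorff space $(X,d)$, we get $p=r=q$. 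Write $x^\ast$ for this common limit. It remains to show $\mathcal F$ converges to $x^\ast$, i.e.\ $B_d(x^\ast,\varepsilon)\in\mathcal F$ for all $\varepsilon>0$. If not, $B_d(x^\ast,\varepsilon_0)\notin\mathcal F$ for some $\varepsilon_0>0$; since $\mathcal F$ is upward closed, no $A_n$ is contained in $B_d(x^\ast,\varepsilon_0)$, so one may choose $y_n\in A_n\setminus B_d(x^\ast,\varepsilon_0)$. Then $(y_n)$ is a selection, so $d(x^\ast,y_n)\to 0$, contradicting $d(x^\ast,y_n)\ge\varepsilon_0$ for all $n$.

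The main obstacle is the asymmetry of $d$ in the converse direction. The naive attempt — to build the required Cauchy sequence directly from the centres $a_n$ — fails: membership in $B_d(a_n,2^{-n})$ controls $d(a_n,\cdot)$ but never $d(\cdot,a_n)$, so the $a_n$ need not be left $d$-Cauchy, and one cannot shrink $B_d(a_n,2^{-n})$ to a ball about a new centre and keep it in $\mathcal F$ (this is precisely the extra room available in the left $K$-Cauchy situation of Proposition~\ref{p.Cauchy-net}, and its absence here). The device that rescues the argument is to use arbitrary selections from the nested sets $A_n$ together with the interleaving trick, which converts the ambiguity of limits into an application of the Hausdorff hypothesis; once $x^\ast$ is pinned down, convergence of $\mathcal F$ falls out. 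I expect the ``common limit'' claim to be the only genuinely nontrivial point; everything else is the triangle inequality and the filter axioms.
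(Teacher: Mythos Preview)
The paper does not supply its own proof of this proposition; it merely states the result and attributes it to K\"unzi \cite{kunzi92}. So there is nothing in the paper to compare your argument against directly.

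That said, your proof is correct and essentially follows the natural line of argument. The first implication is indeed routine. In the converse, your construction of the nested sets $A_n=\bigcap_{k\le n}B_d(a_k,2^{-k})$ and the selection/interleaving device are exactly right: the point that any selection is left $d$-Cauchy (witnessed by the centre $a_{n_0}$), that the interleaved sequence is again a selection-type left $d$-Cauchy sequence, and that Hausdorffness forces uniqueness of the common limit, are all sound. The final contradiction argument showing $B_d(x^\ast,\varepsilon)\in\mathcal F$ is also correct, using only that $A_n\in\mathcal F$ and the upward-closure of filters. Your diagnosis of where the Hausdorff hypothesis is needed, and of why one cannot simply use the centres $a_n$ themselves, is accurate.
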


\subsection{Right $K$-completeness in quasi-pseudometric spaces}
It is  strange  that for the right completeness   the things look worse than for the left completeness.

As remarked Stoltenberg \cite[Example 2.4]{stolt67} a  result similar to Proposition \ref{p.Cauchy-net} does not hold for right $K$-completeness: there exists  a sequentially right $K$-complete $T_1$ quasi-metric space which is not right $K$-complete by nets.  Actually, Stoltenberg \cite{stolt67} proved that the equivalence holds for a more general definition of a right $K$-Cauchy net, but Gregori and Ferrer \cite{greg-fer84} found a gap in Stoltenberg's proof and proposed a new version of Cauchy net. In what follows we will present these results and, in our turn, we shall propose other notion of Cauchy net for which the equivalence holds.

An analog of Proposition \ref{p.Cauchy-net} for right $K$-completeness can   be obtained  only under some supplementary
hypotheses on the quasi-pseudometric space $X$.

A quasi-pseudometric space   $(X,d)$ is called $R_1$ if for all
$x,y\in X,\  d$-$\clos\{x\}\ne d$-$\clos\{y\} $ implies the existence  of
two disjoint $d$-open sets $U,V$ such that $x\in U$ and $y\in V.$
\begin{prop}[\cite{alem-romag97}]\label{p.r-Cauchy-net}
Let $(X,d)$ be a  quasi-pseudometric space. The following are true.
\begin{enumerate}
  \item[{\rm 1.}]   If   $X$ is    right $K$-complete by filters, then every right $K$-Cauchy net in $X$
  is convergent.    In particular,  every right $K$-complete  by filters quasi-pseudometric space
  is  sequentially right $K$-complete.
\item[{\rm 2.}]   If the quasi-pseudometric space $(X,d)$ is $R_1$ then $X$ is  right
$K$-complete by filters if and only if it is sequentially   right $K$-complete.
\end{enumerate} \end{prop}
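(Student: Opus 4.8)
\smallskip
\noindent\emph{Plan of proof.}

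\emph{Part 1 (from filters to nets).} I would pass from a net to its section filter. Given a right $K$-Cauchy net $(x_i:i\in I)$, let $\mathcal F$ be the filter generated by the tails $T_{i_0}=\{x_i:i\ge i_0\}$, $i_0\in I$; this is a filter base since $I$ is directed. The first step is to check that $\mathcal F$ is right $\mathcal U_d$-$K$-Cauchy. Given $U\in\mathcal U_d$, choose $i_0$ as in \eqref{def.C-net}, so that $(x_j,x_i)\in U$ whenever $i_0\le i\le j$, and put $F:=T_{i_0}\in\mathcal F$; then for any $z\in F$, writing $z=x_i$ with $i\ge i_0$, we get $x_j\in U^{-1}(x_i)$ for every $j\ge i$, i.e.\ $T_i\subseteq U^{-1}(z)$, hence $U^{-1}(z)\in\mathcal F$. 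By the hypothesis that $X$ is right $K$-complete by filters, $\mathcal F$ is $\tau(\mathcal U_d)$-convergent to some $x$; since $\mathcal F$ is the section filter, this says precisely that $(x_i)\to x$. The ``in particular'' is then immediate: a right $K$-Cauchy sequence is, word for word, a right $K$-Cauchy net over $(\mathbb N,\le)$, hence convergent.

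\emph{Part 2 (the $R_1$ case).} The implication ``right $K$-complete by filters $\Rightarrow$ sequentially right $K$-complete'' is exactly the last assertion of Part 1 and uses nothing. For the converse I would start from a right $K$-Cauchy filter $\mathcal F$ and manufacture a right $K$-Cauchy \emph{sequence} out of it. Using the quasi-pseudometric form of right $K$-Cauchyness, for every $n$ pick $F_n\in\mathcal F$ with $B_{\bar d}(y,2^{-n})\in\mathcal F$ for all $y\in F_n$, taking the $F_n$ decreasing; then choose inductively $x_n\in F_n\cap\bigcap_{k<n}B_{\bar d}(x_k,2^{-k})$, which is legitimate because this set is a finite intersection of members of $\mathcal F$ (here one uses $x_k\in F_k$ to know $B_{\bar d}(x_k,2^{-k})\in\mathcal F$). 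By construction $d(x_n,x_k)<2^{-k}$ whenever $k\le n$, so $(x_n)$ is right $K$-Cauchy, and sequential right $K$-completeness gives $x_n\xrightarrow{d}x$ for some $x\in X$. (Equivalently, one checks $B_{\bar d}[x_{n+1},2^{-n}]\subseteq B_{\bar d}[x_n,2^{-n+1}]$ and invokes Proposition~\ref{p1.char-compl}.)

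The crux, and where the $R_1$ hypothesis must be used, is to show that $\mathcal F$ itself $\tau_d$-converges to this $x$. The obstruction is the lack of symmetry: the only members of $\mathcal F$ we can name are the ``backward'' balls $B_{\bar d}(x_n,2^{-n})=\{z:d(z,x_n)<2^{-n}\}$, while $\tau_d$-convergence of $\mathcal F$ to $x$ asks that the ``forward'' balls $B_d(x,\varepsilon)$ lie in $\mathcal F$, and in a plain quasi-pseudometric space neither type of ball controls the other. My plan is to argue by contradiction: if $B_d(x,\varepsilon_0)\notin\mathcal F$ for some $\varepsilon_0>0$, then $\mathcal F\cup\{X\setminus B_d(x,\varepsilon_0)\}$ has the finite intersection property, so it generates a finer filter $\mathcal G$, still right $K$-Cauchy and admitting the very same witnesses $F_n$; repeating the construction inside $\mathcal G$ produces a right $K$-Cauchy sequence $(x_n')$ with $d(x,x_n')\ge\varepsilon_0$ for all $n$ and $x_n'\xrightarrow{d}x'$ for some $x'$. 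It then suffices to show that $x$ and $x'$ are topologically indistinguishable, i.e.\ $d(x,x')=d(x',x)=0$; indeed $d(x,x')=0$ already forces $d(x,x_n')\le d(x,x')+d(x',x_n')\to 0$, contradicting $d(x,x_n')\ge\varepsilon_0$. To obtain this indistinguishability I would show that $x$ and $x'$ cannot be separated by disjoint $d$-open sets — bringing in the points $w_n\in B_{\bar d}(x_n,2^{-n})\cap B_{\bar d}(x_n',2^{-n})\in\mathcal F$ together with the convergences $x_n\xrightarrow{d}x$ and $x_n'\xrightarrow{d}x'$ — and then invoke $R_1$. This last step is the main obstacle: squeezing non-separability (hence indistinguishability) of $x$ and $x'$ out of $R_1$ despite the missing symmetry between forward and backward balls is precisely the delicate point, and it is genuinely needed — Stoltenberg's $T_1$ example, which is sequentially right $K$-complete but not right $K$-complete by nets, shows that $R_1$ cannot be weakened to $T_1$.
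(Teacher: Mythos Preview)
The paper does not actually supply a proof of this proposition; it is stated with a citation to Alemany--Romaguera and nothing more. So there is no ``paper's proof'' to compare against, and I can only assess your plan on its own merits.

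Your Part~1 is correct and is exactly the standard section-filter argument; nothing to add.

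Your Part~2 is on the right track, and the construction of the right $K$-Cauchy sequence $(x_n)$ from the filter, together with the contradiction set-up producing $(x_n')$ with $d(x,x_n')\ge\varepsilon_0$, is fine. The gap is precisely where you flag it: you stop short of showing that $x$ and $x'$ are inseparable, and merely introducing points $w_n\in B_{\bar d}(x_n,2^{-n})\cap B_{\bar d}(x_n',2^{-n})$ does not by itself force anything, because $d(w_n,x_n)$ small tells you $x_n\in B_d(w_n,2^{-n})$, not $w_n\in B_d(x_n,\cdot)$. The missing move is to choose the bridge sequence $(w_n)$ so that it is \emph{itself} right $K$-Cauchy: take
\[
w_n\in F_n\cap B_{\bar d}(x_n,2^{-n})\cap B_{\bar d}(x_n',2^{-n})\cap\bigcap_{k<n}B_{\bar d}(w_k,2^{-k}),
\]
a finite intersection of members of $\mathcal F$, hence nonempty. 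Then $(w_n)$ is right $K$-Cauchy and $d$-converges to some $w$; the triangle inequality gives $d(w,x_n)\le d(w,w_n)+d(w_n,x_n)\to 0$, so $(x_n)$ converges to both $x$ and $w$, and likewise $(x_n')$ converges to both $x'$ and $w$. Now $R_1$ enters exactly once, in the clean form ``a net cannot converge to two points with different closures'': hence $d\mbox{-}\clos\{x\}=d\mbox{-}\clos\{w\}=d\mbox{-}\clos\{x'\}$, so $d(x,x')=0$, and $d(x,x_n')\le d(x,x')+d(x',x_n')\to 0$ contradicts $d(x,x_n')\ge\varepsilon_0$. With this one extra clause in the choice of $w_n$ your outline becomes a complete proof.
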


\textbf{Stoltenberg's example}

As we have mentioned, Stoltenberg \cite[Example 2.4]{stolt67} gave an example of a sequentially right $K$-complete $T_1$ quasi-metric space which is not right $K$-complete by nets, which we shall present now.

Denote by $\mathcal A$ the family of all countable subsets of the interval $[0,\frac 13]$. For $A\in \mathcal A$ let
\begin{align*}
  &X^A_1=A,\;\; X^A_{k+1}=A\cup\left\{\frac 12,\frac 34,\dots,\frac{2^k-1}{2^k}\right\},\; k\in\mathbb{N},\; \mbox{and}\\
  &X^A_{\infty}=A\cup\left\{\frac{2^k-1}{2^k}: k\in\mathbb{N}\right\}=\bigcup\{X^A_k:k\in\mathbb{N}\}\,.
  \end{align*}

Put $\mathcal S=\big\{X^A_k: A\in\mathcal A,\, k\in\mathbb{N}\cup\{\infty\}\big\}$ and define $d:\mathcal S\times\mathcal S\to[0,\infty)$ by
$$
d(X^A_k,X^B_j)=\begin{cases}
  0\quad&\mbox{if}\; A=B\;\mbox{and}\;k=j,\; A,B \in \mathcal A,\, k,j\in\mathbb{N}\cup\{\infty\}\\
 2^{-j}\quad&\mbox{if}\; X^B_j\subsetneqq X^A_k,\;  A,B\in \mathcal A,\, k\in\mathbb{N}\cup\{\infty\},\, j\in\mathbb{N},\\
    1\quad&\mbox{otherwise}\,.
 \end{cases}$$

 \begin{prop}\label{p1.Stolt-ex} $(\mathcal S,d)$ is a sequentially right $K$-complete $T_1$ quasi-metric space which
  is not  right $K$-complete  by nets.
  \end{prop}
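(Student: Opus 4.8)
The plan is to verify the four assertions about $(\mathcal S,d)$ in turn: that $d$ is a quasi-metric, that the topology $\tau_d$ is $T_1$, that $(\mathcal S,d)$ is sequentially right $K$-complete, and finally that it is not right $K$-complete by nets. First I would check the quasi-metric axioms. Condition (M1) is immediate from the definition. For the triangle inequality (M3), $d(X_k^A,X_j^B)\le d(X_k^A,X_i^C)+d(X_i^C,X_j^B)$, I would argue by cases according to which of the three clauses defining $d$ apply to the two summands on the right; the only potentially dangerous case is when both right-hand terms are $2^{-i}$ and $2^{-j}$ coming from strict inclusions $X_i^C\subsetneqq X_k^A$ and $X_j^B\subsetneqq X_i^C$, in which case $X_j^B\subsetneqq X_k^A$ with $j$ the correct index, giving $d(X_k^A,X_j^B)=2^{-j}\le 2^{-i}+2^{-j}$; the remaining cases all reduce to $d(X_k^A,X_j^B)\le 1$ against a sum that is at least the value $1$ or at least $2^{-j}$ as needed. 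For (M4$'$), note $d(X_k^A,X_j^B)=d(X_j^B,X_k^A)=0$ forces (since a value $0$ only occurs in the first clause) $A=B$ and $k=j$, so the two points coincide. For the $T_1$ property I would invoke Proposition~\ref{p.top-qsm1}(2): it suffices to show $d(x,y)>0$ whenever $x\ne y$, which is immediate since the value $0$ is attained only on the diagonal.

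Next, sequential right $K$-completeness. Let $(Y_n)$ be a right $K$-Cauchy sequence in $\mathcal S$, so for every $\varepsilon>0$ there is $n_0$ with $d(Y_n,Y_k)<\varepsilon$ for $n_0\le k\le n$. Taking $\varepsilon<1$ and using the structure of $d$, for $n\ge k\ge n_0$ we must have $d(Y_n,Y_k)=2^{-j_k}$ where $Y_k=X^{B_k}_{j_k}\subsetneqq Y_n$ with $j_k\in\mathbb N$ (the value cannot be $0$ unless the terms agree, and cannot be $1$). I expect the key observation to be that this forces, from some index on, the sets $Y_n$ to be nested $Y_{n_0}\subseteq Y_{n_0+1}\subseteq\cdots$ with the "tail indices" $j_k\to\infty$ unless the sequence is eventually constant; in the non-eventually-constant case the $Y_n$ are strictly increasing countable subsets of $[0,\tfrac13]\cup\{\tfrac{2^k-1}{2^k}:k\}$ whose union $Y_\infty:=\bigcup_n Y_n$ has the form $X^A_\infty$ (or $X^A_m$ for some finite $m$ if only finitely many of the new points $\tfrac{2^k-1}{2^k}$ appear), with $A$ the countable set $Y_\infty\cap[0,\tfrac13]$. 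Then $d(Y_\infty,Y_n)=2^{-j_n}\to 0$, i.e. $Y_n\xrightarrow{d}Y_\infty$ by \eqref{char-rho-conv1}, and in the eventually constant case convergence is trivial. So $(\mathcal S,d)$ is sequentially right $K$-complete.

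Finally, the failure of right $K$-completeness by nets. The idea is to index a net by the directed set $(\mathcal A,\subseteq)$ of all countable subsets of $[0,\tfrac13]$ (directed since the union of two countable sets is countable), setting $x_A:=X^A_1=A$. For a fixed $\varepsilon=2^{-j}$ one checks that whenever $A\subseteq B$ with $A\subsetneqq B$ one has... more carefully, I would instead take the net $A\mapsto X^A_1$ and verify the right $K$-Cauchy condition \eqref{def.C-net}: given $U_\varepsilon$ with $2^{-j}<\varepsilon$, I need $i_0$ (here a set $A_0\in\mathcal A$) such that $A_0\subseteq A\subseteq B$ implies $d(x_B,x_A)<\varepsilon$; but $d(X^B_1,X^A_1)=d(B,A)$ equals $0$ if $A=B$ and otherwise (since $A\subsetneqq B$ means $X^A_1\subsetneqq X^B_1$ with index $1$) equals $2^{-1}=\tfrac12$, which is not small — so the bare net $X^A_1$ is not right $K$-Cauchy. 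The correct construction, following Stoltenberg, is to index instead by pairs $(A,k)\in\mathcal A\times\mathbb N$ ordered by $(A,k)\le(B,j)\iff A\subseteq B$ and $k\le j$, and set $x_{(A,k)}=X^A_k$; this is directed, and given $\varepsilon$ pick $k_0$ with $2^{-k_0}<\varepsilon$, then for $(A,k_0)\le(A',k')\le(B,j')$ one has $X^{A'}_{k'}\subsetneqq X^B_{j'}$? — here the main obstacle, and the subtle point that Gregori and Ferrer later flagged, is arranging genuine strict inclusions $X^{A'}_{k'}\subsetneqq X^{B}_{j'}$ with the larger index at least $k_0$ so that $d(x_{(B,j')},x_{(A',k')})=2^{-k'}\le 2^{-k_0}<\varepsilon$; I would choose the cofinal part of the net carefully (e.g. restrict to pairs $(A,k)$ with $A$ ranging over all of $\mathcal A$ and with the finite "dyadic" part determined by $k$) so that the inclusion is automatic. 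Once the net is shown right $K$-Cauchy, I would show it has no $d$-limit: a candidate limit $X^C_m$ would need $d(X^C_m, X^A_k)\to 0$ along the net, forcing $X^A_k\subsetneqq X^C_m$ eventually with index $k$ large — impossible once $A$ is taken larger than $C\cap[0,\tfrac13]$, since then $X^A_k\not\subseteq X^C_m$ and $d=1$. This last non-convergence verification, together with the bookkeeping that makes the net honestly Cauchy, is the heart of the argument and the step I expect to require the most care.
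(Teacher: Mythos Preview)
Your approach coincides with the paper's in all three parts. For sequential right $K$-completeness the paper does exactly what you sketch: from $d(X_{n_0+i+1},X_{n_0+i})<1$ one extracts that each tail index $k_i$ is finite and that $k_i\to\infty$, and then the union $X^A_\infty$ with $A=\bigcup_iA_i$ is the $d$-limit. For the non-completeness by nets the paper uses precisely your second construction: the directed set $\mathcal S_0=\{X^A_k:A\in\mathcal A,\ k\in\mathbb N\}$ ordered by inclusion, which is the same as your product order on pairs $(A,k)$, with the identity as net.

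Two small clarifications. First, your worry about ``arranging genuine strict inclusions'' is unnecessary: if $(A',k')\leqslant(B,j')$ and $(A',k')\ne(B,j')$ in the product order, then intersecting with $[0,\tfrac13]$ and with the dyadic set shows $X^{A'}_{k'}\subsetneqq X^{B}_{j'}$ automatically, whence $d(X^B_{j'},X^{A'}_{k'})=2^{-k'}\leqslant 2^{-k_0}<\varepsilon$; no cofinal restriction is needed. Second, the issue Gregori and Ferrer flagged is not this example but Stoltenberg's \emph{Theorem}~2.5 (the claimed equivalence of sequential completeness with Stoltenberg-net completeness); the present Example~2.4 and its proof stand as written, and the paper reproduces it essentially as you outline.
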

  \begin{proof} The proof that $d$ is a $T_1$ quasi-metric on $\mathcal S$ is straightforward.\smallskip

  I.\, $(\mathcal S,d)$ \emph{is  sequentially right} $K$-\emph{complete}.\smallskip

  Let $(X_n)_{n\in\mathbb{N}}$ be a right $K$-Cauchy sequence in $\mathcal S$.
  Then there exists $n_0\in\mathbb{N}$ such that
  $$
  d(X_m,X_n)<1\quad\mbox{for all}\quad m,n\in\mathbb{N} \;\mbox{with}\; n_0\leqslant n\leqslant m\,.$$
For $i\in\mathbb{N}_0=\mathbb{N}\cup\{0\}$ let
  $$
  X_{n_0+i}=X^{A_i}_{k_i}\;\mbox{where}\; A_i\in\mathcal A\;\mbox{and}\; k_i\in\mathbb{N}\cup\{\infty\}\,.$$

  Since
  $$
  d\left(X^{A_{i+1}}_{k_{i+1}},X^{A_i}_{k_i}\right)<1\,,$$
  it follows $k_i\in\mathbb{N}$ for all $i\in\mathbb{N}_0.$ For $0<\varepsilon<1$ there exists $i_0\in\mathbb{N}$ such that

  $$
  d(X_{n_0+i+1},X_{n_0+i})<\varepsilon\quad\mbox{for all}\; i\geqslant i_0\,,$$
  which means that
  $$
 2^{-k_i}= d\left(X^{A_{i+1}}_{k_{i+1}},X^{A_i}_{k_i}\right)<\varepsilon\quad\mbox{for all}\; i\geqslant i_0\,.$$

 This  shows that $\lim_{i\to\infty}k_i=\infty.$

 Let $A=\bigcup\left\{A_i : i\in\mathbb{N}_0\right\}$ and $X=X^A_\infty.$  Then $X^{A_i}_{k_i} \subsetneqq X^A_\infty$, so that
 $$
d(X,X_{n_0+i})=  d\left(X^{A}_{\infty},X^{A_i}_{k_i}\right)= 2^{-k_i}\to 0\;\mbox{ as }\; i\to\infty\,.$$
  which shows that the sequence $(X_n)$ is $d$-convergent to $X$. \smallskip

II.\,\emph{The quasi-metric space $(\mathcal S,d)$ is not  right $K$-complete  by nets.}\smallskip

 Let $\mathcal S_0=\{X^A_k: A\in\mathcal A,\, k\in\mathbb{N}\}$ ordered  by
 $$
 X\leqslant Y\iff X\subseteq Y,\;\mbox{for}\; X,Y\in\mathcal S_0\,.$$

 We have
 $$
 X^A_i\leqslant X^B_j\iff \begin{cases}
   A\subseteq B\quad\mbox{and}\\
   i    \leqslant j
 \end{cases}$$
 for $X^A_i, X^B_j\in\mathcal S_0$, $(\mathcal S_0,\leqslant)$ is directed and the mapping  $\phi:\mathcal S_0\to\mathcal S$ defined by $ \phi(X)=X,\, X\in \mathcal S_0,$ is a net in $\mathcal S.$

 Let us show first that the net $\phi$ is right $K$-Cauchy. For $\varepsilon>0$ choose  $k\in\mathbb{N}$ such that $2^{-k}<\varepsilon.$ For some $C\in\mathcal A,\, X_k^C$ belongs to $\mathcal S_0$  and
$$
d(X^A_j,X^B_i)=2^{-i}\leqslant 2^{-k}<\varepsilon$$
for all
$$
  X^A_j,X^B_i\in\mathcal S_0\;\mbox{ with }\; X^C_k\leqslant X^B_i\leqslant X^A_j,\, X^A_j\ne X^B_i\,,$$
  showing that the net $\phi$ is right $K$-Cauchy.

  Let $X=X_k^C$ be an arbitrary element in $\mathcal S.$ We show that for every $X^A_i\in\mathcal S_0$ there exists $X^B_j\in\mathcal S_0$ with $X^A_i\leqslant X^B_j$
  such that $d(X,X^B_j)=1,$ which will imply that the net $\phi$ is not $d$-convergent to $X$.

  Since $C$ is a countable set,  there exists $x_0\in[0,\frac 13]\setminus C.$ For  an arbitrary $X^A_i\in\mathcal S_0$ let  $B= A\cup\{x_0\}$. Then   $X^B_i\in\mathcal S_0$,
 $X^A_i\leqslant X^B_i$ and $X^C_k\nsubseteq X^B_i$, so that, by the definition of the metric $d$,  $d(X^C_k,X^B_i)=1.$\end{proof}

\textbf{ Stoltenberg-Cauchy nets}

 Stoltenberg \cite{stolt67}   also considered a more general
  definition of a right $K$-Cauchy net as a net $(x_i:i\in I)$   satisfying the condition:
  for every    $\varepsilon>0$ there exists $i_\varepsilon\in I$ such that
\begin{equation}\label{def.Stolt2}
d(x_i,x_j)<\varepsilon\quad\mbox{for all}\quad i,j\geqslant i_\varepsilon \;\mbox{ with }\; i\nleqslant j\,.
\end{equation}

Let us call such a net \emph{Stoltenberg-Cauchy} and  \emph{Stoltenberg completeness} the completeness with respect to Stoltenberg-Cauchy nets.

It follows that, for this definition,
\begin{equation*}\label{eq1.S-Cauchy} d(x_i,x_j)<\varepsilon \quad\mbox{and}\quad d(x_j,x_i)<\varepsilon\quad\mbox{for all }\; i,j\geqslant i_\varepsilon\;\mbox{  with }\; \ i\nsim  j,
\end{equation*}
  where   $i\nsim j$   means that $i,j$ are incomparable  (that is, no one of the relations $i\leqslant j$ or
$j\leqslant i$ holds).\medskip

\textbf{Gregori-Ferrer-Cauchy nets}

Later, Gregori and Ferrer \cite{greg-fer84} found a gap in the proof of Theorem 2.5 from \cite{stolt67} and provided a counterexample to it,  based on Example 2.4 of Stoltenberg (see Proposition \ref{p1.Stolt-ex}).
\begin{example}[\cite{greg-fer84}] \label{ex.Greg-Fer} Let $\mathcal A$,  $(\mathcal S,d)$ be as in the preamble to Proposition \ref{p1.Stolt-ex} and $I=\mathbb{N}\cup\{a,b\}$, where the  set $\mathbb{N}$ is considered with the usual order and $a,b$  are two distinct elements not belonging to $\mathbb{N}$ with
\begin{align*}
  &k\leqslant a,\quad k\leqslant b,\quad\mbox{for all }\;  k\in\mathbb{N},\\
  & a\leqslant a,\quad  b\leqslant b,\quad a\leqslant b,
  \quad b\leqslant a.
\end{align*}
 Consider two sets $A,B\in \mathcal A$ with $A\subsetneqq B$ and let $\phi:I\to \mathcal S$ be given by
$$
\phi(k)=X^A_k,\,k\in\mathbb{N},\quad \phi(a)=X^A_\infty,\quad  \phi(b)=X^B_\infty\,.$$

Then the net  $\phi$ is  right Cauchy  in the sense of \eqref{def.Stolt2}  but not  convergent in   $(\mathcal S,d) $.\end{example}

 Indeed, for $0<\varepsilon<1$ let $k_0\in\mathbb{N}$ be such that $2^{-k_0}<\varepsilon.$

Since $$i\leqslant a,\; i\leqslant b,\; i\geqslant k_0,\;\,\forall i\in\mathbb{N},\; k_0\leqslant a\leqslant b,\;k_0\leqslant  b\leqslant a,$$
it follows that the   condition $i  \nleqslant j$ can hold for some $i,j\in I,\, i,j\geqslant k_0$, in the following cases:

\begin{align*}
  {\rm(a)}\quad &i,j\in\mathbb{N},\;i,j\geqslant k_0,\; j<i;\\
  {\rm(b)}\quad &i=a,\; j\in\mathbb{N},\, j\geqslant k_0\\
  {\rm(c)}\quad &i=b,\; j\in\mathbb{N},\, j\geqslant k_0\\
\end{align*}

  In the case (a), $X^A_j\subsetneqq X^A_i$ and
  $$
d(\phi(i),\phi(j))= d(X^A_i,X^A_j)=2^{-j}\leqslant 2^{-k_0}<\varepsilon\,.$$

 In the case (b), $X^A_j\subsetneqq X^A_\infty$ and again

 $$
d(\phi(a),\phi(j))=d(X^A_\infty,X^A_j)=2^{-j}\leqslant 2^{-k_0}<\varepsilon.$$

The case (c) is similar to (b).

To show that $\phi$ is not convergent let $X\in \mathcal S\setminus \{X^B_\infty\}.$  Then  $b\geqslant i$ for any $i\in I$  and
$$
d(X,\phi(b))=d(X,X^B_\infty)=1\,,$$
so that $\phi$ does not converge to $X$.  If $X=X^B_\infty$, then $a\geqslant i$ for any $i\in I$  and
$$ d(X^B_\infty,\phi(a))=d(X^B_\infty,X^A_\infty)=1\,.$$

Gregori and Ferrer \cite{greg-fer84} proposed a new definition of a right K-Cauchy net, for which  the   equivalence to sequential completeness holds.
\begin{defi}\label{def.GFC}
 A net $(x_i:i\in I)$ in a quasi-metric space $(X,d)$ is  called $GF$-\emph{Cauchy} if one of the following conditions holds:
\begin{enumerate}
\item[\rm(a)]  for every maximal element $j\in I$ the net $(x_i)$ converges to $x_j$;
\item[\rm(b)]  $I$ has no maximal elements and the net $(x_i)$ converges;
\item[\rm(c)]  $I$ has no maximal elements and the net $(x_i)$ satisfies the condition \eqref{def.Stolt2}.
\end{enumerate}
\end{defi}

\textbf{Maximal elements and net convergence}

For a better understanding of this definition we shall analyze the relations between maximal elements in a preordered  set and the convergence of nets. Recall that in the definition of a directed set $(I,\leqslant)$ the relation $\,\leqslant\,$ is supposed to be only a preorder, i.e.  reflexive and transitive and not necessarily antireflexive  (see \cite{Kelley}).
Notice that some authors suppose that in the definition of a directed set  $\,\leqslant\,$ is a partial order (see, e.g., \cite{Willard}).  For a discussion of this matter see \cite[\S 7.12, p. 160]{Schechter}.

  Let  $(I,\leqslant)$ be a preordered set.
  An element $j\in I$  is called:
  \begin{itemize}\item
    \emph{strictly maximal} if there is no  $i\in I\setminus\{j\}$ with $j\leqslant i,$ or, equivalently,
\begin{equation}\label{def1.maxi}
j\leqslant i\;\Rightarrow\; i=j,\quad\mbox{for every }\; i\in I;  \end{equation}
\item  \emph{maximal} if
\begin{equation}\label{def2.maxi}
j\leqslant i\;\Rightarrow\; i\leqslant j,\quad\mbox{for every }\; i\in I\,.  \end{equation}
\end{itemize}

\begin{remark}\label{re1.maxi}  Let $(I,\leqslant)$ be a preordered set.

1. A strictly maximal element is maximal, and if $\leqslant $ is an order, then   these   notions are equivalent.

Suppose now that  the set $I$ is further directed. Then the following hold.

2.    Every  maximal element $j$ of $I$ is a maximum for $I$,  i.e. $i\leqslant j$ for all $i\in I$.

3. If $j$ is a maximal element  and    $j'\in I$  satisfies $j\leqslant j' $, then $j'$ is also a maximal element.

 4.  (Uniqueness of the strictly maximal element) If $j$  is a strictly maximal element, then $j'=j$ for  any maximal element  $j'$  of $I$.
\end{remark}
\begin{proof} 1. These assertions  are obvious.

 2. Indeed, suppose that $j\in I$ satisfies \eqref{def2.maxi}. Then, for arbitrary $i\in I$, there exists $i'\in I$ with $i'\geqslant j,i.$  But
$j\leqslant i' $  implies $i'\leqslant j$ and so $i\leqslant i'\leqslant j.$  (We use the notation $i\geqslant j,k$ for $i\geqslant j\wedge i\geqslant k$.)

3. Let $i\in I$ be such that $j'\leqslant i$. Then $j\leqslant i$ and, by the maximality of $j$, $i\leqslant j\leqslant j'$.

4. If $j$ is strictly maximal and
 $j'$ is a maximal element of $I$, then, by 2,  $j'\le j$ so that, by \eqref{def2.maxi} applied to $j'$,   $j \le j'$  and so, by \eqref{def1.maxi} applied to $j$, $j'=j.$ \end{proof}

We present now some remarks on maximal elements  and   net convergence.
\begin{remark}\label{re.maxi-nets} Let $(X,d)$ be a quasi-metric space, $(I,\leqslant)$ a directed sets and $(x_i:i\in I)$ a net in $X$.

1. If   $(I,\leqslant)$  has a strictly maximal element $j$, then  the net $(x_i)$ is convergent to $x_j$.

 \begin{itemize}\item[\rm 2.(a)] If the net $(x_i)$ converges to $x\in X$, then $d(x,x_j)=0$ for every maximal element $j$ of $I$. If the topology $\tau_d$ is $T_1$ then, further, $x_j=x.$
\item[\rm (b)] If the net $(x_i)$ converges to $x_j$ and to $x_{j'}$, where $j,j'$ are maximal elements of $I$, then $x_j=x_{j'}$.
\item[\rm (c)] If $I$ has maximal  elements and, for some $x\in X$, $x_j=x$ for every maximal element $j$, then  the net $(x_i)$ converges to $x$.
\end{itemize}\end{remark}
\begin{proof}
1. For an arbitrary $\varepsilon>0$ take $i_\varepsilon=j.$ Then   $i\geqslant j$ implies $i=j$, so that
$$
d(x_j,x_i)=d(x_j,x_j)=0<\varepsilon\,.$$

  2.(a) For every $\varepsilon>0$ there exists $i_\varepsilon\in I$ such that $d(x,x_i)<\varepsilon$ for all $i\geqslant i_\varepsilon.$
By Remark \ref{re1.maxi}.2,  $j\geqslant i_\varepsilon$ for every maximal  $j$, so that  $d(x,x_{j})<\varepsilon$ for all $\varepsilon>0$, implying  $d(x,x_{j})=0$.

If the topology $\tau_d$ is $T_1$, then, by  Proposition \ref{p.top-qsm1}.2, $x_j=x$.

(b) By (a), $d(x_{j},x_{j'})=0$ and $d(x_{j'},x_{j})=0$, so that $x_j=x_{j'}$.

(c)
Let $x\in X$ be such that $x_j=x$ for every maximal element $j$ of $I$ and  let $j$ be a fixed maximal element of $I$. For any $\varepsilon>0$ put $i_\varepsilon=j$. Then, by Remark \ref{re1.maxi}.3,  any $i\in I$ such that $i\geqslant j$ is also a maximal element of $I$, so that $x_i=x$ and $d(x,x_i)=0<\varepsilon.$
\end{proof}

Let us say that a quasi-metric space $(X,d)$ is \emph{GF-complete} if every GF-Cauchy net (i.e. satisfying the conditions (a),(b),(c) from Definition \ref{def.GFC}) is convergent. Remark that, with this definition, condition (b) becomes tautological and so superfluous, so it suffices to ask that every net satisfying (a) and (c) be convergent.

By Remarks \ref{re1.maxi}.1 and \ref{re.maxi-nets}.1,  (a) always holds   if $\leqslant$ is an order, so that, in this case, a  net satisfying condition (c) is a  GF-Cauchy net and so GF-completeness agrees with that given by Stoltenberg.\medskip

\textbf{Strongly Stoltenberg-Cauchy nets}

In order to avoid  the shortcomings of the preorder relation,  as, for instance, those put in evidence  by Example \ref{ex.Greg-Fer}, we propose the following definition.
\begin{defi}
  A net $(x_i:i\in I)$ in a quasi-metric space $(X,d)$ is called \emph{strongly Stoltenberg-Cauchy}   if for every $\varepsilon>0$ there exists $i_\varepsilon\in I$ such that,  for all $i,j\geqslant i_\varepsilon$,
 \begin{equation}\label{def.St-Cob}
 (j\leqslant i\vee i\nsim  j)\;\Rightarrow\;d(x_i,x_j)<\varepsilon\,.
 \end{equation}\end{defi}

We present now some remarks on the relations of this notion with  the other notions of Cauchy net (Stoltenberg and GF), as well as the relations between the corresponding completeness notions. It is obvious that in the case of a sequence $(x_k)_{k\in\mathbb{N}}$ each of these three notions agrees with the right $K$-Cauchyness of $(x_k)$.

\begin{remark}\label{re.St-Cob} Let $(x_i:i\in I)$ be a net in a quasi-metric space $(X,d)$.

1.(a)  We have
\begin{equation}\label{eq.Stolt-str}
i\nleqslant j\;\Rightarrow\;\big((j\leqslant i\wedge i\ne j)\vee (i\nsim  j)\big)\,,\end{equation}
for all $i,j\in I$. If $\leqslant$ is an order, then the reverse implication also holds.

(b) If the net  $(x_i:i\in I)$     satisfies \eqref{def.St-Cob} then it
  satisfies \eqref{def.Stolt2}, i.e. every strong Stoltenberg-Cauchy net is Stoltenberg-Cauchy.   If $\leqslant $ is an order, then these notions are equivalent.

  Hence, net completeness with respect to
\eqref{def.Stolt2} (i.e. Stoltenberg completeness) implies net completeness with respect to   \eqref{def.St-Cob};

2. Suppose that the net  $(x_i:i\in I)$     satisfies \eqref{def.St-Cob}.

  (a) If $j,j'$ are maximal elements of $I$,  then $x_j=x_{j'}$. Hence,
  if $I$ has maximal elements,   then there exists $x\in X$ such that $x_j=x$ for every maximal element $j$ of $I$, and the net $(x_i)$ converges to $x.$

(b) Consequently,   the net $(x_i)$  also
  satisfies the conditions (a) and (c) from Definition \ref{def.GFC}, so that, GF-completeness implies  completeness with respect to   \eqref{def.St-Cob}.
\end{remark}\begin{proof}

  1.(a)  Let $i,j\in I$ with $i\nleqslant j$. Since   $j\leqslant i$ and $i\ne j$ if $i,j$ are comparable,  the implication \eqref{eq.Stolt-str} holds. If $\leqslant$ is an order and $i\ne j$, then
 $j\leqslant i\;\Rightarrow\; i\nleqslant j$ and  $i\nsim j\;\Rightarrow\; i\nleqslant j$.

 (b) Since it suffices to ask that \eqref{def.Stolt2} and \eqref{def.St-Cob} hold only for distinct $i,j\geqslant i_\varepsilon$, the equivalence of these notions in the case when $\leqslant$ is an order follows.

Suppose that the net $(x_i)$ satisfies \eqref{def.St-Cob}. For $\varepsilon>0$ choose $i_\varepsilon\in I$ according to \eqref{def.St-Cob} and let $i,j\geqslant i_\varepsilon$ with $i\nleqslant j.$
Taking into account \eqref{eq.Stolt-str}  it follows $d(x_i,x_j)<\varepsilon$, i.e.  $(x_i)$ satisfies \eqref{def.Stolt2}.

  Suppose now that every net satisfying \eqref{def.Stolt2} converges and let $(x_i)$ be a net in $X$ satisfying \eqref{def.St-Cob}. Then it satisfies \eqref{def.Stolt2} so it converges.

  2.(a) Let $j,j'$ be maximal elements of $I$.  For $\varepsilon >0$ choose $i_\varepsilon$ according to \eqref{def.St-Cob}. By Remark \ref{re1.maxi}.2, $j,j'\geqslant i_\varepsilon,\, j\leqslant j',\ j'\leqslant j$, so that $d(x_{j'},x_j)<\varepsilon$ and $d(x_{j},x_{j'})<\varepsilon$. Since these inequalities hold for every $\varepsilon >0$, it follows $d(x_{j'},x_j)=0=d(x_{j},x_{j'})$ and so $x_j=x_{j'}$.  The convergence of the net $(x_i)$ follows from Remark \ref{re.maxi-nets}.2.(c).

(b) The assertions on  GF-Cauchy nets  follow from (a).
\end{proof}

The following example shows that the notion of strong Stoltenberg-Cauchy net is effectively stronger that that of Stoltenberg-Cauchy net.
\begin{example} Let $X=\mathbb{R}$ and $u(x)=x^+,\, x\in X$ be the asymmetric norm  defined in Example \ref{ex.u-topR}. Then $d_u(x,y)=u(y-x)=(y-x)^+,\, x,y\in X,$ is a quasi-metric on $X$. Let $I=\mathbb{N}\cup\{a,b\}$ be the directed set considered in Example \ref{ex.Greg-Fer}. Define $x_k=0$ for $k\in\mathbb{N},\, x_a=1$ and $x_b=2$. Then $(x_i:i\in I)$ is Stoltenberg-Cauchy but not strongly  Stoltenberg-Cauchy nor GF-Cauchy.
 \end{example}

 Indeed, for $\varepsilon >0$ let $i_\varepsilon =1$ and $i,j\geqslant 1$ with $i\nleqslant j$. Then  $j\in \mathbb{N}$ and  we distinguish three possibilities:
 \begin{itemize}
   \item \; $i\in \mathbb{N}$  \, and $ j<i\,,$ when $d_u(x_i,x_j)=(x_j-x_i)^+=0$;
   \item    $i=a $   and  $d_u(x_a,x_j)=(x_j-x_a)^+=(0-1)^+=0$;
    \item    $i=b $   and  $d_u(x_b,x_j)=(x_j-x_b)^+=(0-2)^+=0$.
 \end{itemize}

It follows that  $d_u(x_i,x_j)=0<\varepsilon$   in all cases, showing that  $(x_i)$ satisfies the condition \eqref{def.Stolt2}, that is, it is Stoltenberg-Cauchy.

Notice that any two elements in $I$ are comparable. Let $0<\varepsilon<1.$ Since, for every $i_\varepsilon \in I, \, a,b\geqslant i_\varepsilon$ and $\ b\leqslant a$,
but $d_u(x_a,x_b)=(2-1)^+=1$, it follows  that \eqref{def.St-Cob} fails, that is, $(x_i)$ is not  strongly Stoltenberg-Cauchy.

Since $a$ is a maximal element  of $I$,   $b\geqslant i_\varepsilon$ for any $  i_\varepsilon\in I$, the above equality ($d_u(x_a,x_b)=1$) shows that the net $(x_i)$ does not converge to $x_a$. Consequently $(x_i)$ is not GF-Cauchy (see Definition \ref{def.GFC}).

We show now  that completeness by nets with respect to \eqref{def.St-Cob} is equivalent to sequential right $K$-completeness.
\begin{prop}[\cite{stolt67}, Theorem 2.5]\label{p.Stolt-seq-compl}
  A $T_1$ quasi-metric space $(X,d)$ is sequentially  right $K$-complete if and only if every
 net in $X$ satisfying \eqref{def.St-Cob} is $d$-convergent, i.e. every strongly Stoltenberg-Cauchy net is convergent.
  \end{prop}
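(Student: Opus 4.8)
The plan is to prove the two implications separately. The easy direction (completeness for strongly Stoltenberg-Cauchy nets implies sequential right $K$-completeness) is immediate: a right $K$-Cauchy sequence, viewed as a net over the directed set $(\mathbb{N},\leqslant)$, satisfies \eqref{def.St-Cob} --- in $\mathbb{N}$ there are no incomparable indices, so \eqref{def.St-Cob} collapses to the right $K$-Cauchy condition --- hence it $d$-converges by hypothesis; no separation axiom is used here. For the substantial direction (sequential right $K$-completeness implies every net satisfying \eqref{def.St-Cob} converges), assume $(X,d)$ is $T_1$ and sequentially right $K$-complete and let $(x_i:i\in I)$ satisfy \eqref{def.St-Cob}. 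First I would fix, for each $n$, an index $\sigma_n\in I$ witnessing \eqref{def.St-Cob} for $\varepsilon=2^{-n}$, chosen by directedness so that $\sigma_1\leqslant\sigma_2\leqslant\cdots$ (enlarging a witness keeps it a witness). The argument then splits according to whether the family $\{\sigma_n:n\in\mathbb{N}\}$ has an upper bound in $I$.

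Suppose it has one. Call $i\in I$ \emph{heavy} if $\sigma_n\leqslant i$ for all $n$; then heavy elements exist, and any $i$ above a heavy element is again heavy, so the heavy elements form an upward-closed (hence cofinal) subset of $I$. For heavy $j',j''$ one has, for every $n$, either $j''\leqslant j'$ or $j'\nsim j''$, and both are $\geqslant\sigma_n$, so \eqref{def.St-Cob} forces $d(x_{j'},x_{j''})<2^{-n}$ for all $n$, whence $d(x_{j'},x_{j''})=0$ and, by $T_1$ (Proposition \ref{p.top-qsm1}.2), $x_{j'}=x_{j''}$. Thus all heavy elements carry a common value $x^{*}$; fixing one heavy $j$, every $i\geqslant j$ is heavy with $x_i=x^{*}$, so the net is eventually constant equal to $x^{*}$ and $d$-converges to it. (This case uses $T_1$ but not completeness, and it subsumes the situation where $I$ has a maximal element, a maximal element of a directed set being a maximum.)

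Suppose $\{\sigma_n\}$ has no upper bound. Then $(x_{\sigma_n})_n$ is right $K$-Cauchy: for $N\leqslant m\leqslant l$ we have $\sigma_m\leqslant\sigma_l$ and $\sigma_m,\sigma_l\geqslant\sigma_N$, so \eqref{def.St-Cob} for $2^{-N}$ gives $d(x_{\sigma_l},x_{\sigma_m})<2^{-N}$. By sequential right $K$-completeness it $d$-converges to some $x$, i.e. $d(x,x_{\sigma_k})\to0$. I claim the whole net $d$-converges to $x$. Given $\varepsilon>0$, pick $n$ with $2^{-n}<\varepsilon/2$ and with $d(x,x_{\sigma_k})<\varepsilon/2$ for all $k\geqslant n$, and set $i_0=\sigma_n$. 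For $i\geqslant i_0$: since $i$ is not an upper bound of $\{\sigma_n\}$ there is some index with $\sigma_k\nleqslant i$, and as the $\sigma_k$ increase this persists for all larger $k$, so we may take $k\geqslant n$. Then $\sigma_k\nleqslant i$ means $i\leqslant\sigma_k$ or $i\nsim\sigma_k$; hence \eqref{def.St-Cob} for $2^{-n}$, applied with $\sigma_k$ and $i$ (both $\geqslant\sigma_n$) in the two net-value slots, yields $d(x_{\sigma_k},x_i)<2^{-n}$, and together with $d(x,x_{\sigma_k})<\varepsilon/2$ and the triangle inequality, $d(x,x_i)<\varepsilon$. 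So $x_i\xrightarrow{d}x$.

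The step I expect to be the real obstacle is exactly the one these two cases are designed to bypass. Because $d$ is asymmetric, the naive strategy of extracting a right $K$-Cauchy sequence and declaring the net to converge to its limit cannot work directly: $\tau_d$-convergence controls only distances measured \emph{from} the limit, whereas one must estimate $d(x,x_i)$ for every large $i$, and a right $K$-Cauchy sequence need not even have a unique $d$-limit in a merely $T_1$ (non-Hausdorff) space. This is precisely where the strengthening of \eqref{def.Stolt2} to \eqref{def.St-Cob} is essential, and where Stoltenberg's original argument broke down (his weaker notion \eqref{def.Stolt2} being insufficient, as Example \ref{ex.Greg-Fer} shows): the ``cycle'' clause $j''\leqslant j'$ in \eqref{def.St-Cob}, combined with $T_1$, pins the net down on the cofinal set of heavy elements, while the incomparability clause $i\nsim\sigma_k$, absent from \eqref{def.Stolt2}, supplies the bound $d(x_{\sigma_k},x_i)<2^{-n}$ for the large indices lying off to the side of the chain $(\sigma_k)$.
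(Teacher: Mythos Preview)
Your argument is essentially the paper's: choose increasing witnesses $\sigma_n$ for $\varepsilon=2^{-n}$, split according to whether $\{\sigma_n\}$ is bounded above, use $T_1$ to collapse the net to a constant in the bounded case and sequential completeness plus the triangle inequality in the unbounded case. One minor slip in your bounded case: the disjunction ``either $j''\leqslant j'$ or $j'\nsim j''$'' fails when $j'\leqslant j''$ strictly, so you cannot directly conclude $d(x_{j'},x_{j''})<2^{-n}$ there --- but then \eqref{def.St-Cob} with the roles swapped gives $d(x_{j''},x_{j'})=0$ and $T_1$ still forces $x_{j'}=x_{j''}$; the paper sidesteps this by comparing each $i\geqslant j_0$ only to the fixed heavy element $j_0$, where $j_0\leqslant i$ is automatic.
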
\begin{proof}
    We have only to prove that  the sequential right  $K$-completeness implies that   every
 net in $X$ satisfying \eqref{def.St-Cob} is $d$-convergent.

    Let $(x_i:i\in I)$ be   a  net in $X$ satisfying \eqref{def.St-Cob}. Let   $i_k\geqslant i_{k-1},\ k\geqslant 2,$
 be   such that \eqref{def.St-Cob} holds for $\varepsilon =1/2^k,\ k\in \mathbb{N}.$

 This is possible. Indeed, take $i_1$ such that \eqref{def.St-Cob} holds for $\varepsilon=1/2.$  If $i'_2$ is such that \eqref{def.St-Cob} holds for $\varepsilon=2^{-2}$, then pick
 $i_2\in I$ such that $i_2 \geqslant i_1,i'_2.$ Continuing by induction one obtains the desired sequence $(i_k)_{k\in\mathbb{N}}.$

We distinguish two cases.\smallskip

\emph{Case I.} \; $\exists j_0\in I,\;\exists k_0\in\mathbb{N},\quad \forall k\geqslant k_0,\; i_k\leqslant j_0\,.$\smallskip

Let $i\geqslant j_0.$ Then for every $k$,\,  $i_k\leqslant j_0\leqslant i$  implies  $d(x_i,x_{j_0})<2^{-k}$
so that  $d(x_i,x_{j_0})=0.$ Since the quasi-metric space $(X,d)$ is $T_1,$ it follows
$x_i=x_{j_0}$ for all $i\geqslant j_0$ (see Proposition \ref{p.top-qsm1}), so that the net
$(x_i:i\in I)$ is $d$-convergent to $x_{j_0}.$\smallskip

\emph{Case II.} \; $ \forall j\in I,\; \forall k\in\mathbb{N},\; \exists k'\geqslant k,\quad i_{k'}\nleqslant j\,.$\smallskip

 The inequalities $d(x_{i_{k+1}},x_{i_{k}})<2^{-k}, \ k\in \mathbb{N},$  imply that the sequence
    $(x_{i_k})_{k\in\mathbb{N}} $ is right $K$-Cauchy  (see Proposition \ref{p.series-complete}), so it is $d$-convergent to some $x\in X.$

For
  $\varepsilon > 0$ choose  $k_0\in\mathbb{N} $  such that   $2^{-k_0}<\varepsilon$ and $d(x,x_{i_k})<\varepsilon $
for  all $k\geqslant k_0.$

Let $i\in I,\, i\geqslant i_{k_0}.$ By hypothesis, there exists $k\geqslant k_0$ such that $i_k\nleqslant i,$ implying $i\leqslant i_k\vee i_k\nsim i.$
Since $i_{k_0}\leqslant i_k,i$, by the choice of $i_{k_0}$, $d(x_{i_k},x_i)<2^{-k_0}<\varepsilon$  in both of these cases. But then
$$
d(x,x_i)\leqslant d(x,x_{i_k})+d(x_{i_k},x_i)<2\varepsilon\,,$$
proving the convergence of the net  $(x_i)$ to $x.$
\end{proof}

\textbf{The proof of Proposition \ref{p.Stolt-seq-compl} in the case of GF-completeness} \smallskip

As the result in  \cite{greg-fer84} is given without proof, we shall supply one.
\begin{prop} A $T_1$ quasi-metric space $(X,d)$ is right $K$-sequentially complete
if and only if every net satisfying the conditions (a) and (c) from Definition \ref{def.GFC} is convergent.\end{prop}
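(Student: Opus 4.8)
The plan is to adapt the proof of Proposition \ref{p.Stolt-seq-compl} to the GF setting, observing that a GF-Cauchy net is harder to handle than a strongly Stoltenberg-Cauchy one only because condition (c) uses the weaker Stoltenberg relation \eqref{def.Stolt2} rather than \eqref{def.St-Cob}; the extra flexibility is recovered by exploiting condition (a), which forces the net to agree on all maximal elements. As before, only one implication needs proof: assume $(X,d)$ is sequentially right $K$-complete and let $(x_i:i\in I)$ be a net satisfying (a) and (c). If $I$ has a maximal element $j$, then by (a) the net converges to $x_j$ and we are done, so we may assume $I$ has no maximal elements, which is precisely the hypothesis under which (c) is in force. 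So from now on the net satisfies \eqref{def.Stolt2} and $I$ has no maximal elements.

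First I would extract, exactly as in the previous proof, an increasing sequence $i_1\leqslant i_2\leqslant\dots$ in $I$ such that \eqref{def.Stolt2} holds with $\varepsilon=2^{-k}$ for the index $i_k$; this uses only directedness. Then I would split into the same two cases. \textbf{Case I:} there exist $j_0\in I$ and $k_0\in\mathbb{N}$ with $i_k\leqslant j_0$ for all $k\geqslant k_0$. The subtlety here is that in the GF setting $j_0$ need not be maximal, so I cannot immediately conclude the net is eventually constant on $\{i:i\geqslant j_0\}$. Instead, for $i\geqslant j_0$ with $i\neq j_0$ we have $j_0\leqslant i$ and $i\nleqslant j_0$ (since $I$ has no maximal elements one can in fact choose $i$ with $i\nleqslant j_0$, but more carefully: pick any $i\geqslant j_0$; if $i\leqslant j_0$ as well we want to compare them). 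The clean route is: for each $k\geqslant k_0$, $i_k\leqslant j_0$ and $i_k\leqslant i$, and if $i\nleqslant i_k$ then $d(x_i,x_{i_k})<2^{-k}$ by \eqref{def.Stolt2}; since $I$ has no maximal element, for each $k$ we can find $i'\geqslant i, i_k$ with $i'\nleqslant i_k$ (directedness plus non-maximality of $i_k$), giving $d(x_{i'},x_{i_k})<2^{-k}$ and, because $i'\nleqslant i$ is arranged similarly, control of $d(x_{i'},x_i)$; letting $k\to\infty$ and using the $T_1$ property pins down all these values, yielding convergence to a common limit. \textbf{Case II:} for every $j\in I$ and every $k$ there is $k'\geqslant k$ with $i_{k'}\nleqslant j$ — this case is \emph{identical} to Case II of Proposition \ref{p.Stolt-seq-compl}: the sequence $(x_{i_k})$ is right $K$-Cauchy by Proposition \ref{p.series-complete}, hence $d$-convergent to some $x$, and the tail estimate $d(x,x_i)\leqslant d(x,x_{i_k})+d(x_{i_k},x_i)$ with $i_k\nleqslant i$ (so $d(x_{i_k},x_i)<\varepsilon$ via \eqref{def.Stolt2}) shows the whole net converges to $x$.

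The main obstacle is Case I: unlike the strongly Stoltenberg-Cauchy situation, where \eqref{def.St-Cob} directly gives $d(x_i,x_{j_0})<2^{-k}$ from $j_0\leqslant i$ alone, here \eqref{def.Stolt2} only speaks about pairs with $i\nleqslant j$, so the argument that the net is eventually constant must be routed through non-maximality of $I$ to manufacture incomparable (or strictly larger) indices. One must be slightly careful that the net restricted to $\{i : i\geqslant i_{k_0}\}$ is still well-behaved and that the common value does not depend on the auxiliary indices chosen; the $T_1$ hypothesis (via Proposition \ref{p.top-qsm1}.2) is what turns the estimates $d(x_i,x_{j_0})=0$ into genuine equalities $x_i=x_{j_0}$, and this is exactly where $T_1$ is used, as in the Stoltenberg proof. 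With Case I handled this way and Case II quoted verbatim, the equivalence follows.
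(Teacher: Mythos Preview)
Your overall strategy matches the paper's: reduce to case~(c), extract an increasing sequence $(i_k)$, and split into the same two cases. But there is a real gap in Case~II, precisely where you claim it is ``identical'' to Case~II of Proposition~\ref{p.Stolt-seq-compl}. In that proposition the estimate $d(x_{i_{k+1}},x_{i_k})<2^{-k}$ follows from \eqref{def.St-Cob} applied with $j=i_k\leqslant i=i_{k+1}$; here you only have \eqref{def.Stolt2}, which requires $i_{k+1}\nleqslant i_k$ to produce that bound. A merely increasing sequence $i_1\leqslant i_2\leqslant\dots$ built ``by directedness alone'' does not guarantee this --- in a preorder one may well have $i_k\leqslant i_{k+1}$ and $i_{k+1}\leqslant i_k$ simultaneously --- so you cannot conclude that $(x_{i_k})$ is right $K$-Cauchy. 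The paper repairs this by constructing the sequence from the start with the extra requirement $i_{k+1}\nleqslant i_k$, using non-maximality: for every $i\in I$ there exists $i'$ with $i\leqslant i'$ and $i'\nleqslant i$. That is the missing idea; once it is in place, Case~II goes through exactly as you describe.

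Your Case~I is headed in the right direction but is more convoluted than necessary, and as written does not clearly secure both $i'\nleqslant i$ and $i'\nleqslant i_k$ simultaneously. The paper's route is shorter: for $i\geqslant j_0$ choose (by non-maximality) $i'$ with $i\leqslant i'$ and $i'\nleqslant i$. Since $i',i\geqslant j_0\geqslant i_k$ for every $k\geqslant k_0$, \eqref{def.Stolt2} gives $d(x_{i'},x_i)<2^{-k}$ for all such $k$, hence $x_{i'}=x_i$ by $T_1$. Moreover $i'\nleqslant j_0$ (otherwise $i'\leqslant j_0\leqslant i$, contradicting $i'\nleqslant i$), so the same argument yields $x_{i'}=x_{j_0}$. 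Thus $x_i=x_{j_0}$ for all $i\geqslant j_0$; there is no need to route the estimates through the indices $i_k$.
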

\begin{proof}
 Obviously, a proof is needed only  for the case (c).

Suppose that the directed set $(I,\leqslant)$ has no maximal elements and let $(x_i:i\in I)$ be a net in a quasi-metric space $(X,d)$ satisfying \eqref{def.Stolt2}.

 The proof   follows  the  ideas of the proof of Proposition \ref{p.Stolt-seq-compl} with some further details. Let  $i_k\leqslant i_{k+1},\, k\in\mathbb{N},$ be a sequence of indices in $I$ such that
$d(x_i,x_j)<2^{-k}$ for all $i,j\geqslant i_k$ with $i\nleqslant j$. We show that we can further suppose  that $i_{k+1}\nleqslant i_{k}$.

Indeed, the fact that $I$ has no maximal elements implies that for every $i\in I$ there exists $i'\in I$ such that
\begin{equation}\label{eq1.no-maxi}
i\leqslant i'\;\mbox{ and }\;i'\nleqslant i.
\end{equation}

Let $i'_1\in I$ be such that \eqref{def.Stolt2} holds for $\varepsilon=2^{-1}.$ Take $i_1$ such that $i'_1\leqslant i_1$ and $i_1\nleqslant i'_1$. Let $i_2'\geqslant i_1$ be such that
\eqref{def.Stolt2} holds for $\varepsilon=2^{-2}$  and let $i_2\in I$ satisfying $i_2'\leqslant i_2$ and $i_2\nleqslant i_2'$. Then $i_1\leqslant i_2$ and $i_2\nleqslant i_1$, because
 $i_2\leqslant i_1\leqslant i'_2$ would  contradict the choice of $i_2.$

By induction one obtains a sequence $(i_k)$ in $I$ satisfying $i_k\leqslant i_{k+1}$ and $i_{k+1}\nleqslant i_k$ such that \eqref{def.Stolt2} is satisfied with
$\varepsilon=2^{-k}$  for every $i_k$.

We shall again consider  two cases.\smallskip

\emph{Case I.} \; $\exists j_0\in I,\;\exists k_0\in\mathbb{N},\quad \forall k\geqslant k_0,\; i_k\leqslant j_0\,.$\smallskip

Let $i\geqslant j_{0}$. By \eqref{eq1.no-maxi} there exists $i'\in I$ such that $i\leqslant  i'$ and $i'\nleqslant i,$ implying $d(x_{i'},x_i)<2^{-k}$ for all $k\geqslant k_0$,
that is $d(x_{i'},x_i)=0$, so that, by  $T_1$, $x_{i'}=x_i$.

We also have $i'\nleqslant j_0$ because   $i'\leqslant j_0$  would imply  $i'\leqslant i$, in contradiction to the choice of $i'$. But then, $d(x_{i'},x_{j_0})<2^{-k}$ for all $k\geqslant k_0$,
so that, as above,  $d(x_{i'},x_i)=0$ and  $x_{i'}=x_{j_0}$.

Consequently, $x_i=x_{j_0}$ for every $i\geqslant j_0$, proving the convergence of the net $(x_i)$ to $x_{j_0}$.\smallskip

\emph{Case II.} \; $ \forall j\in I,\; \forall k\in\mathbb{N},\; \exists k'\geqslant k,\quad i_{k'}\nleqslant j\,.$\smallskip

The condition $d(x_{i_k},x_{i_{k+1}})<2^{-k}, \, k\in\mathbb{N},$  implies that the sequence $(x_{i_k})_{k\in\mathbb{N}}$ is right $K$-Cauchy, so that there exists $x\in X$ with $d(x,x_{i_k})\to 0$ as $k\to \infty$.

For $\varepsilon>0$ let $k_0\in\mathbb{N}$ be such that $2^{-k_0}<\varepsilon$ and $d(x,x_{i_k})<\varepsilon $ for all $k\geqslant k_0$.

 Let $i\geqslant i_{k_0}$. By II, for $j=i$ and $k=k_0$, there exists $k\geqslant k_0 $ such that $i_k\nleqslant i$. The conditions  $k\geqslant k_0,\, i_{k_0}\leqslant i,\, i_{k_0}\leqslant i_k$ and $i_k\nleqslant i$
 imply
 $$
 d(x,x_{i_k})<\varepsilon\;\mbox{ and }\; d(x_{i_k},x_i)<\varepsilon\,,$$
 so that
 $$
 d(x,x_i)\leqslant d(x,x_{i_k})+d(x_{i_k},x_i)<2\varepsilon,$$
 for all $i\geqslant i_{k_0}$.
\end{proof}

\section{Conclusions} The present paper shows that there are big differences between the notions of completeness in metric and  in quasi-metric spaces, but in spite of this, by giving appropriate definitions we can make the things to look better.  The differences are even bigger in what concerns compactness in these spaces -- in contrast to the metric case, in quasi-metric spaces the notions of compactness, sequential compactness  and countable compactness can be different (see \cite{Cobzas}).

\textbf{Acknowledgements.}  The author expresses his deep gratitude to reviewers for the careful reading of the manuscript and for the suggestions that led
to  improvements   both in presentation and in contents.


\begin{thebibliography}{99}
 \bibitem{AKR15}
 Agarwal, R.~P.; Karapinar, E.; O'Regan, D.; Rold\'an-L\'opez-de-Hierro, A.~F.
\emph{Fixed Point Theory in Metric Type Spaces}.    Springer, Cham, \textbf{2015}.

 \bibitem{alem-romag97}
Alemany, E.; Romaguera, S.  On right {$K$}-sequentially complete
  quasi-metric spaces. \emph{Acta Math. Hunga}r. \textbf{1997}, 75, no.~3, 267--278.

   \bibitem{bodjana81}
 Bodjanov\'{a}, S. Some basic notions of mathematical analysis in oriented metric spaces.
\emph{Math. Slovaca} \textbf{1981}, 31, no.~3, 277--289.

\bibitem{chandok20}
 Chandok, S.; Manro, S.  Existence of fixed points in quasi metric spaces. \emph{Commun. Fac. Sci. Univ. Ank. Ser. A1. Math. Stat}.  \textbf{2020}, 69, no.~1, 266--275.

\bibitem{chen07}
 Chen, S.~A.; Li, W.; Zou, D.; Chen, S.~B.  Fixed point theorems in quasi-metric spaces.
\emph{Proc. 6th International Conf. on Machine Learning and Cybernetics}, Hong Kong, 19--22 Aug. \textbf{2007},  p.
2499--2504.

  \bibitem{Cobzas}
Cobza\c{s}, S. \emph{Functional Analysis in Asymmetric Normed Spaces}.  Frontiers in
  Mathematics, Birkh\"auser/Springer Basel AG, Basel, \textbf{2013}.

  \bibitem{cob18}
  Cobza\c{s}, S. Fixed points and completeness in metric and generalized metric spaces.
   \emph{Fundam. Prikl. Mat}. \textbf{2018}, 22 , no.~1, 127--215, Russian, (see also: arXiv:1508.05173v5 [math.FA]).


\bibitem{FG}
Fletcher, P., Lindgren, W.~F.
\emph{Quasi-uniform Spaces}.
   Marcel Dekker, Inc., New York Basel, \textbf{1982}.

  \bibitem{greg-fer84}
Gregori, V.;   Ferrer, J.
 A note on R. Stoltenberg's completion theory of quasi-uniform spaces.
\emph{Proc. Lond. Math. Soc}., III Ser., \textbf{1984},  49, 36.

\bibitem{gupta20}
  Gupta, V.; Dhawan, P.;  Kaur, J.
 On $\Psi$-projective expansion, quasi
partial metrics aggregation with
an application. \emph{J. Appl. Anal. Comput}.  \textbf{2020}, 10, no.~3, 946--959.

\bibitem{Kelley}
Kelley, J. L.  \emph{ General Topology}.     Graduate Texts in Mathematics, No. 27. Springer-Verlag, New
York-Berlin, \textbf{1975}.

  \bibitem{kelly63}
Kelly, J. C.   Bitopological spaces. \emph{Proc. London Math.  Soc}. \textbf{1963}, 13, 71--89.

\bibitem{kunzi92}
K\" {u}nzi, H.-P. A.  Complete quasi-pseudo-metric spaces. \emph{Acta Math. Hungar}.
  \textbf{1992},  59, no.~1--2, 121--146.

  \bibitem{kunzi09}
K\" {u}nzi, H.-P. A.
An introduction to quasi-uniform spaces.
in Mynard, Fr\'ed\'eric (ed.) et al., \emph{Beyond Topology}.  Contemporary Mathematics 486,  American Mathematical Society, Providence, RI, \textbf{2009}, pp. 239--304.


\bibitem{kunzi-reily93}
 K\" {u}nzi, H.-P. A.; Mr{\v{s}}evi{\'c}, M.; Reilly, I.~L.;
Vamanamurthy,  M.~K. Convergence, precompactness and symmetry in
quasi-uniform spaces.     \emph{Math. Japon}. \textbf{1993}, 38, 239--253.

\bibitem{latif18}
 Latif, A.;    Nazir, T.;   Abbas, M.  Stability of fixed points in generalized metric spaces.  \emph{J. Nonlinear Var. Anal}. \textbf{2018}, 2, 287--294.


\bibitem{Nachbin}
Nachbin, L.
\emph{Topology and Order}.   (Van Nostrand Mathematical Studies. Vol. 4).
 D. Van Nostrand Company, Inc., Princeton,   New-Jersey-Toronto-New York-London, 1965 (translated from the Portuguese original published in 1950).


\bibitem{reily-subram82}
Reilly,  I.~L.;  Subrahmanyam,  I.~L.;   Vamanamurthy,  M.~K.
Cauchy sequences in quasi-pseudo-metric spaces. \emph{Monatsh. Math}. \textbf{1982}, 93, 127--140.

\bibitem{reily-vaman84}
 Reilly, I.~L.; Vamanamurthy,  M.~K.
On oriented metric spaces, \emph{Math. Slovaca} \textbf{1984}, 34, no.~3, 299--305.

\bibitem{romag92}
Romaguera, S.  Left {$K$}-completeness in quasi-metric spaces. \emph{Math. Nachr}.
  \textbf{1992}, 157, 15--23.

  \bibitem{romag-tirado15}
 Romaguera, S.; Tirado, P. {A} characterization of {S}myth complete quasi-metric spaces via {C}aristi's fixed point theorem.  \emph{Fixed Point Theory Appl}. \textbf{2015}, 2015:183.

 \bibitem{Schechter}
Schechter, E.  \emph{Handbook of Analysis and its Foundations},   Academic Press \textbf{1996} (CD-Rom edition 1999).

\bibitem{Steen-Seb}
Steen, L.~A.; Seebach, J.~A., Jr. \emph{Counterexamples in Topology}. (Reprint of the second (1978) edition),
Dover Publications, Inc., Mineola, NY \textbf{1995}.

 \bibitem{stolt67}
 Stoltenberg,  R.~A.  Some properties of quasi-uniform spaces.
\emph{Proc. London Math. Soc}. \textbf{1967}, 17, 226--240.

\bibitem{Weil}
Weil, A.
\emph{Sur les espaces \`a structure uniforme et sur la topologie g\'en\'erale}. (French)
Actual. sci. industr. 551, 39 p., Strasbourg, \textbf{1937}.

\bibitem{Willard}
  Willard, S.  \emph{General Topology}.  (Reprint of the 1970 Addison-Wesley  original).
  Mineola,  NY: Dover Publications  \textbf{2004}.

\bibitem{wilson31}
Wilson, W.~A.  On quasi-metric spaces.
\emph{Amer. J. Math}. \textbf{1931}, 53, no.~3, 675--684.























 \end{thebibliography}
\end{document}